\newcommand{\IR}{\mathbb R}
\newcommand{\IN}{\mathbb N}
\newcommand{\II}{\mathbb I}
\newcommand{\e}{\varepsilon}
\newcommand{\U}{\mathcal U}
\newtheorem{theorem}{Theorem}
\newtheorem{proposition}{Proposition}
\newtheorem{corollary}{Corollary}
\theoremstyle{definition}
\newtheorem{example}{Example}
\title{Topological and ditopological unosemigroups}
\author{Taras Banakh and Iryna Pastukhova}
\address{Ivan Franko National University of Lviv, Ukraine}
\email{t.o.banakh@gmail.com, irynkapastukhova@gmail.com}
\subjclass{22A15}
\thanks{The first author has been partially financed by NCN grant  DEC-2011/01/B/ST1/01439.}
\begin{document}

\begin{abstract} In this paper we introduce and study a new topologo-algebraic structure called a (di)topological unosemigroup.
This is a topological semigroup endowed with continuous unary operations of left and right units
(which have certain continuous division property called the dicontinuity).
We show that the class of ditopological unosemigroups contains all topological groups,
 all topological semilattices, all uniformizable topological unosemigroups, all compact topological unosemigroups, and is closed
under the operations of taking subunosemigroups, Tychonoff
product, reduced product, semidirect product, and the Hartman-Mycielski
extension.\end{abstract} \maketitle

\section{Introduction}

In this paper we shall introduce and study a new topologo-algebraic structure called a [di]topological (left, right) unosemigroup.
This is a topological semigroup endowed with continuous unary operations of (left, right) units [having certain continuous
division property called the dicontinuity]. Introducing topological unosemigroups was motivated by the problem of generalization
of Hryniv's Embedding Theorems \cite{Hryn} (on embeddings of Clifford compact topological inverse  semigroups into products of cones
over compact topological groups) beyond the class of compact topological inverse semigroups.

Topological and ditopological (left, right) unosemigroups will be introduced in Sections~\ref{s:tu} and \ref{s:di}.
In Sections~\ref{s3} and \ref{s4} we shall study the class of ditopological unosemigroups and shall show that it
contains all topological groups, all topological semilattices, and all compact Hausdorff topological unosemigroups.
In Section~\ref{s:Op} we shall introduce and study some operations over topological (left, right) unosemigroups.

\section{Topological unosemigroups}\label{s:tu}

In this section we shall introduce the notion of a topological (left, right) unosemigroup.

Let $S$ be a semigroup (i.e., a non-empty set $S$ endowed with an associative binary operation $\cdot:S\times S\to S$).
An unary operation $\lambda:S\to S$ is called a {\em left unit operation} on $S$ if $\lambda(x)\cdot x=x$ for all $x\in S$.
A semigroup $S$ endowed with a left unit operation $\lambda:S\to S$ is called a {\em left unosemigroup}.

By analogy we can introduce right unosemigroups. Namely, an unary
operation $\rho:S\to S$ on a semigroup $S$ is called a {\em right
unit operation} on $S$ if $x\cdot \rho(x)=x$ for all $x\in S$. A
{\em right unosemigroup} is a semigroup $S$ endowed with a right unit
operation $\rho:S\to S$.

A {\em unosemigroup} is a semigroup $S$ endowed with a left unit operation $\lambda:S\to S$ and a right  unit operation $\rho:S\to S$.

In should be mentioned that the class of left (resp. right) unosemigroups contains the class of domain (resp. range) semigroups, considered in \cite{DJS}.
\smallskip

Now we introduce topological versions of these notions.

A {\em topological semigroup} is a topological space $S$ endowed with a continuous associative binary operation $\cdot:S\times S\to S$.
A {\em topological left unosemigroup} is a topological semigroup $S$ endowed with a continuous left unit operation $\lambda:S\to S$.
A {\em topological right unosemigroup} is a  topological semigroup $S$ endowed with a continuous right unit operation $\rho:S\to S$.
A {\em topological unosemigroup} is a  topological semigroup $S$ endowed with a continuous left unit operation $\lambda:S\to S$ and
a continuous right unit operation $\rho:S\to S$.

Topological (left, right) unosemigroups are objects of the category whose morphisms are continuous (left, right) unosemigroup homomorphisms.
A function $h:X\to Y$ between two topological left unosemigroups $(X,\lambda_X)$ and $(Y,\lambda_Y)$ is called a {\em left unosemigroup homomorphism}
if $h$ is a semigroup homomorphism and $h(\lambda_X(x))=\lambda_Y(h(x))$ for all $x\in X$.  By analogy we can define {\em right unosemigroup homomorphisms} between topological right unosemigroups and {\em unosemigroup homomorphisms} between topological unosemigroups.

\section{Ditopological unosemigroups}\label{s:di}

To introduce {\bf di}topological (left, right) unosemigroups, we first
introduce two kinds of {\bf di}vision operations on each
semigroup. Namely, for two points $a,b$ of a semigroup $S$
consider the sets
$$a\leftthreetimes b=\{x\in S:ax=b\}\mbox{ \ \ and \ \ }b\rightthreetimes a=\{x\in S: xa=b\}$$
which can be thought as the results of left and right division of $b$ by $a$.
Respectively, for two subsets $A,B\subset S$ the sets
$$A\leftthreetimes B=\bigcup_{(a,b)\in A\times B}a\leftthreetimes b\mbox{ \ \ and \ \ }B\rightthreetimes A=\bigcup_{(a,b)\in A\times B}b\rightthreetimes a$$
can be thought as the results of left and right division of $B$ by $A$.

A continuous left unit operation $\lambda:S\to S$ on a topological semigroup $S$ is called {\em dicontinuous}
at a point $x\in S$ if for every neighborhood $O_x\subset S$ of $x$ there are neighborhoods $U_x\subset S$ and $W_{\lambda(x)}\subset \lambda(S)$
of the points $x$ and $\lambda(x)$ in $S$ and $\lambda(S)$ respectively, such that
$$(W_{\lambda(x)}\leftthreetimes U_x)\cap \lambda^{-1}(W_{\lambda(x)})\subset O_x.$$
A left unit operation $\lambda:S\to S$ is defined to be {\em dicontinuous} if it is dicontinuous at each point $x\in X$.

A {\em ditopological left unosemigroup} is a topological semigroup $S$ endowed with a dicontinuous left unit operation $\lambda:S\to S$.

A trivial (but important) example of a ditopological left unosemigroup is an idempotent topological left unosemigroup. A (topological) left
unosemigroup $(S,\lambda)$ is called {\em idempotent} if $\lambda(x)=x$ for all $x\in S$. In this case $x=\lambda(x)\cdot x=xx$, which means that $S$
is an idempotent semigroup.

\begin{proposition}\label{p:LI} Each idempotent topological left unosemigroup $(S,\lambda)$ is ditopological.
\end{proposition}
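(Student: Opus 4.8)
The plan is to simply unwind the definition of dicontinuity in the degenerate case that the idempotency hypothesis forces upon us. First I would observe that $\lambda(x)=x$ for all $x\in S$ means exactly that $\lambda=\id_S$. Since the identity map is continuous, $(S,\lambda)$ is automatically a topological left unosemigroup (and $S$ is an idempotent semigroup, as noted just before the statement), so the only thing left to check is that $\lambda$ is dicontinuous at an arbitrary point $x\in S$.

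So fix $x\in S$ and a neighborhood $O_x\subset S$ of $x$. Because $\lambda=\id_S$, we have $\lambda(S)=S$, $\lambda(x)=x$, and $\lambda^{-1}(W)=W$ for every $W\subset S$. The key observation is that the set occurring in the dicontinuity condition,
$$(W_{\lambda(x)}\leftthreetimes U_x)\cap\lambda^{-1}(W_{\lambda(x)}),$$
is always contained in its second factor $\lambda^{-1}(W_{\lambda(x)})$; hence it suffices to pick the neighborhood $W_{\lambda(x)}\subset\lambda(S)=S$ of $\lambda(x)=x$ so small that $\lambda^{-1}(W_{\lambda(x)})=W_{\lambda(x)}\subset O_x$. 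Concretely, I would take $W_{\lambda(x)}$ to be the interior of $O_x$ (a neighborhood of $x$ in $\lambda(S)=S$) and $U_x=S$, and then
$$(W_{\lambda(x)}\leftthreetimes U_x)\cap\lambda^{-1}(W_{\lambda(x)})\subset\lambda^{-1}(W_{\lambda(x)})=W_{\lambda(x)}\subset O_x,$$
which is exactly what dicontinuity at $x$ demands.

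Since $x$ and $O_x$ were arbitrary, $\lambda$ is dicontinuous and $(S,\lambda)$ is a ditopological left unosemigroup. I do not expect a genuine obstacle here: the entire content is the remark that idempotency collapses $\lambda$ to the identity, which trivializes both $\lambda(S)$ and the preimages $\lambda^{-1}(\cdot)$, so the term $\lambda^{-1}(W_{\lambda(x)})$ alone already forces the required inclusion; in particular neither the semigroup multiplication nor its continuity nor any analysis of the division sets $W\leftthreetimes U$ is needed. The only "insight" is to base the choice of $W_{\lambda(x)}$ on the factor $\lambda^{-1}(W_{\lambda(x)})$ rather than attempting to understand $W_{\lambda(x)}\leftthreetimes U_x$.
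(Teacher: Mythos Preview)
Your proof is correct and follows essentially the same approach as the paper: both arguments exploit that when $\lambda=\id_S$ the intersection $(W_{\lambda(x)}\leftthreetimes U_x)\cap\lambda^{-1}(W_{\lambda(x)})$ is automatically contained in $\lambda^{-1}(W_{\lambda(x)})=W_{\lambda(x)}$, so any choice with $W_{\lambda(x)}\subset O_x$ works. The paper simply takes $U_x=W_{\lambda(x)}=O_x$, while you take $U_x=S$ and $W_{\lambda(x)}=\mathrm{int}(O_x)$; these are cosmetic differences only.
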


\begin{proof} Given a point $x\in S$ and a neighborhood $O_x\subset S$ of $x$ put $U_x=W_{\lambda(x)}=O_x$ and observe that
$$(W_{\lambda(x)}\leftthreetimes U_x)\cap \lambda^{-1}(W_{\lambda(x)})\subset W_{\lambda(x)}=O_x,$$
which means that the left unit operation $\lambda$ is dicontinuous at $x$ and hence the topological left unosemigroup $(S,\lambda)$ is dicontinuous.
\end{proof}

By analogy we can introduce the right and two-sided versions of these notions.

Namely, a continuous right unit operation $\rho:S\to S$ on a topological semigroup $S$ is called {\em dicontinuous} if for every point $x\in S$
and every neighborhood $O_x\subset S$ of $x$ there are neighborhoods $U_x\subset S$ and $W_{\rho(x)}\subset\rho(S)$ of $x$ and $\rho(x)$
such that $$(U_x\rightthreetimes W_{\rho(x)})\cap \rho^{-1}(W_{\rho(x)})\subset O_x.$$
A {\em ditopological right unosemigroup} is a topological semigroup $S$ endowed with a dicontinuous right unit operation $\rho:S\to S$.

A (topological) right unosemigroup $(S,\rho)$ is called {\em idempotent} if $\rho(x)=x$ for all $x\in X$. By analogy with Proposition~\ref{p:LI} we can prove:

\begin{proposition}\label{p:RI} Each idempotent topological right unosemigroup $(S,\lambda)$ is ditopological.
\end{proposition}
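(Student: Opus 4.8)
The plan is to imitate the proof of Proposition~\ref{p:LI} verbatim, exploiting the fact that for an idempotent right unosemigroup the right unit operation $\rho$ is the identity map, so that $\rho(x)=x$, $\rho(S)=S$, and $\rho^{-1}(A)=A$ for every $A\subset S$.

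First I would fix an arbitrary point $x\in S$ together with an arbitrary neighborhood $O_x\subset S$ of $x$, and set $U_x=W_{\rho(x)}=O_x$. Since $\rho=\id$, the set $W_{\rho(x)}=O_x$ is simultaneously a neighborhood of $x$ in $S$ and a neighborhood of $\rho(x)=x$ in $\rho(S)=S$, so it is an admissible choice in the definition of dicontinuity of a right unit operation.

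Next I would verify the required inclusion. Because $\rho$ is the identity, $\rho^{-1}(W_{\rho(x)})=W_{\rho(x)}=O_x$, hence
$$(U_x\rightthreetimes W_{\rho(x)})\cap \rho^{-1}(W_{\rho(x)})\subset \rho^{-1}(W_{\rho(x)})=W_{\rho(x)}=O_x,$$
which is exactly the condition witnessing that $\rho$ is dicontinuous at $x$. As $x\in S$ was arbitrary, $\rho$ is dicontinuous, and therefore $(S,\rho)$ is a ditopological right unosemigroup.

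There is essentially no obstacle here: the whole content is the observation that idempotency collapses $\rho$ to the identity, after which the division set $U_x\rightthreetimes W_{\rho(x)}$ plays no role and intersecting with $\rho^{-1}(W_{\rho(x)})$ already forces membership in $O_x$. The only mild point to keep an eye on is that $W_{\rho(x)}$ must be a neighborhood of $\rho(x)$ in the subspace $\rho(S)$ rather than in $S$, but since $\rho(S)=S$ this subspace coincides with $S$ and the distinction disappears.
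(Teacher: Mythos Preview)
Your proposal is correct and is precisely the analogue of the proof of Proposition~\ref{p:LI} that the paper intends; the paper itself simply states ``By analogy with Proposition~\ref{p:LI} we can prove'' and gives no separate argument, so your write-up is exactly what is meant.
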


A {\em ditopological unosemigroup} is a topological semigroup $S$ endowed
with a dicontinuous left unit operation $\lambda:S\to S$ and
dicontinuous right unit operation $\rho:S\to S$. So, a
ditopological unosemigroup carries the structures of a ditopological left
and right unosemigroups.

A (topological) unosemigroup $(S,\lambda,\rho)$ is called {\em idempotent} if $\lambda(x)=\rho(x)=x$ for all $x\in X$.
Propositions~\ref{p:LI} and \ref{p:RI} imply that each idempotent topological unosemigroup is ditopological.

\section{Uniformizable topological unosemigroups}\label{s3}

In this section we shall prove that the dicontinuity of a continuous left (right) unit operation on a topological semigroup automatically follows from
the right (left) uniformizability of the topological semigroup.

We define a topological semigroup $S$ to be {\em left-uniformizable} (resp. {\em right-uniformizable}) if the topology of $X$ is generated by
a uniformity $\U$ such that for every entourage $U\in\U$ there is an entourage $V\in U$ such that $x\cdot B(y,V)\subset B(xy,U)$
(resp. $B(x,V)\cdot y\subset B(xy,U)$ for all points $x,y\in S$. Here $B(x,V)=\{z\in S:(x,z)\in V\}$ stands for the $V$-ball
centered at the point $x\in S$.

Each topological group $G$ is left (resp. right) uniformizable by its left (resp. right) uniformity  generated by the base consisting of
the entourages $\{(x,y)\in G\times G:y\in xU\}$ (resp.  $\{(x,y)\in G\times G:y\in Ux\}$)
where $U=U^{-1}$ runs over symmetric neighborhoods of the idempotent in $G$.

\begin{theorem}\label{t3.1} Each continuous left unit operation $\lambda:S\to S$ on a right-uniformizable topological semigroup $S$ is dicontinuous.
Consequently, each right-uniformizable topological left unosemigroup $(S,\lambda)$ is a ditopological left unosemigroup.
\end{theorem}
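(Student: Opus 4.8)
Fix a point $x\in S$ and a neighborhood $O_x\subset S$ of $x$; I want to produce neighborhoods $U_x\subset S$ and $W_{\lambda(x)}\subset\lambda(S)$ with $(W_{\lambda(x)}\leftthreetimes U_x)\cap\lambda^{-1}(W_{\lambda(x)})\subset O_x$. The idea is that the defining inclusion is essentially a continuity-of-division statement, and on a right-uniformizable semigroup we can solve the equation $ax=b$ approximately by using continuity of $\lambda$ together with the uniform estimate $B(a,V)\cdot c\subset B(ac,U)$. Concretely, if $a$ is close to $\lambda(x)$ and $ax'=b$ with $b$ close to $x$, then I want to conclude $x'$ is close to $x$; the trick is to left-multiply: $x' = \lambda(x')\cdot x'$, but more usefully, since $ax'=b$, applying the left unit of $a$ (or comparing with $\lambda(x)x=x$) should pin $x'$ down.

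**Key steps.** Let $\U$ be the generating uniformity witnessing right-uniformizability. Given $O_x$, choose an entourage $U\in\U$ with $B(x,U)\subset O_x$, then (using right-uniformizability) an entourage $V\in\U$ with $B(p,V)\cdot q\subset B(pq,U)$ for all $p,q\in S$, and shrink $V$ to be symmetric if convenient. First I would use continuity of $\lambda$ at $x$ to pick a neighborhood $U_x\subset S$ of $x$ such that $\lambda(U_x)\subset B(\lambda(x),V)$. Set $W_{\lambda(x)}:=B(\lambda(x),V)\cap\lambda(S)$, a neighborhood of $\lambda(x)$ in $\lambda(S)$. Now take any $z\in(W_{\lambda(x)}\leftthreetimes U_x)\cap\lambda^{-1}(W_{\lambda(x)})$. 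By definition of $\leftthreetimes$ there are $a\in W_{\lambda(x)}$ and $b\in U_x$ with $az=b$; also $\lambda(z)\in W_{\lambda(x)}$, so $(\lambda(x),\lambda(z))\in V$. The plan is to rewrite $z=\lambda(z)\cdot z$ and compare $\lambda(z)\cdot z$ with $\lambda(x)\cdot z$ and then with $a\cdot z=b$: from $(\lambda(x),\lambda(z))\in V$ and the uniform estimate we get $\lambda(z)\cdot z\in B(\lambda(x)z,U)$, i.e. $z\in B(\lambda(x)z,U)$. Separately I need to control $\lambda(x)z$: here I use that $a\in B(\lambda(x),V)$ and $az=b$, so again by the uniform estimate $\lambda(x)z\in B(az,U)=B(b,U)$. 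Combining, $z\in B(\lambda(x)z,U)\subset$ (after composing entourages) a controlled ball around $b$, and since $b\in U_x$ is close to $x$, this places $z$ in $O_x$. I would set things up from the start with an entourage $U'$ such that $U'\circ U'\circ U'\subset U$ (or as many compositions as the chaining requires) and choose $V$ relative to $U'$, so that the final composition lands inside $B(x,U)\subset O_x$; I would also need $U_x\subset B(x,U')$, which is an additional harmless shrinking.

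**Main obstacle.** The delicate point is that the estimate $\lambda(z)\cdot z\in B(\lambda(x)\cdot z,U)$ gives $z$ close to $\lambda(x)z$, but then I must bound $\lambda(x)z$ using $az=b$ with $a$ merely close to $\lambda(x)$ — and right-uniformizability controls $B(p,V)\cdot q$, which is exactly $\{a'q: (p,a')\in V\}$, so $az\in B(\lambda(x)z,U)$ follows, giving $b\in B(\lambda(x)z,U)$ and hence (by symmetry of the entourages) $\lambda(x)z\in B(b,U)$. Chaining $z\mapsto\lambda(x)z\mapsto b\mapsto x$ through three entourages is what forces the preliminary choice of $U'$ with $U'\circ U'\circ U'\subset U$. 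The one spot that needs care is making sure $W_{\lambda(x)}$ is taken as a neighborhood \emph{in $\lambda(S)$} (not in $S$) — but that only makes the hypothesis $a\in W_{\lambda(x)}$ weaker, which is fine since every use of $a$ goes through $a\in B(\lambda(x),V)$ in the ambient uniformity. The "consequently" clause is then immediate: a right-uniformizable topological left unosemigroup is by definition a right-uniformizable topological semigroup with a continuous left unit operation, which the theorem shows is dicontinuous.
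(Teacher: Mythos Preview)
Your proof is correct and follows essentially the same route as the paper's: both pivot through the point $\lambda(x)z$, using the right-uniformizability estimate to place both $z=\lambda(z)z$ and $b=az$ in a small ball around $\lambda(x)z$, and then chain three entourages to land $z$ in $B(x,U)\subset O_x$. Your preliminary step of shrinking $U_x$ so that $\lambda(U_x)\subset B(\lambda(x),V)$ is never actually used (you only need $U_x\subset B(x,U')$, as you note at the end), but this is harmless.
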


\begin{proof} Let $\U$ be a uniformity on $S$ witnessing that the semigroup $S$ is right-uniformizable.
To show that the left unit operation $\lambda:S\to S$ is
dicontinuous, fix a point $x\in S$ and a neighborhood $O_x\subset
S$ of $x$. Since $\U$ generates the topology of $S$, the
neighborhood $O_x$ contains the ball $B(x,U)=\{y\in S:(x,y)\in
U\}$ for some entourage $U\in\U$. Find an entourage $V\in\U$ such
that $V\circ V\circ V\subset U$ where $V\circ V\circ V=\{(x,y)\in
S\times S:\exists u,v\in S\;\;(x,u),(u,v),(v,y)\in V\}$. Since $S$
is right-uniformizable by $\U$, for the entourage $V$ there is an
entourage $W\in\U$ such that $W\subset V$ and $B(s,W)\cdot
t\subset B(st,W)$ for all points $s,t\in S$.

We claim that the neighborhoods $U_x=B(x,W)$ and
$W_{\lambda(x)}=B(\lambda(x),W)\cap \lambda(S)$ witness that the left unit
operation $\lambda$ is dicontinuous $x$. Indeed, take any point
$y\in (W_{\lambda(x)}\leftthreetimes U_x)\cap
\lambda^{-1}(W_{\lambda(x)})$. Then  $\lambda(y)\in
W_{\lambda(x)}$ and $wy\in U_x$ for some $w\in W_{\lambda(x)}$. It
follows from $w,\lambda(y)\in W_{\lambda(y)}=B(\lambda(x),W)$ that
$$\{wy,y\}=\{wy,\lambda(y)y\}\subset B(\lambda(x),W)\cdot y\subset B(\lambda(x)y,V)$$ and hence
$(wy,y)\in V\circ V$, which implies
$$y\in B(wy,V\circ V)\subset B(U_x,V\circ V)=B(B(x,W),V\circ V)\subset B(x,V\circ V\circ V)\subset B(x,U)\subset O_x.$$
\end{proof}

By analogy we can prove

\begin{theorem}\label{t3.2}  Each continuous right unit operation $\lambda:S\to S$ on a left-uniformizable topological semigroup $S$ is dicontinuous.
Consequently, each left-uniformizable topological right unosemigroup $(S,\lambda)$ is a ditopological right unosemigroup.
\end{theorem}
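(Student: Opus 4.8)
The plan is to dualize the proof of Theorem~\ref{t3.1}, interchanging left and right multiplication throughout; no new idea is required. (One could equally deduce it from Theorem~\ref{t3.1} applied to the opposite semigroup $S^{\mathrm{op}}$, whose multiplication is $x*y:=y\cdot x$: on $S^{\mathrm{op}}$ the map $\lambda$ is a left unit operation, the left-uniformizability of $S$ is exactly the right-uniformizability of $S^{\mathrm{op}}$, and the division $\rightthreetimes$ on $S$ becomes the division $\leftthreetimes$ on $S^{\mathrm{op}}$, so that the present statement turns into Theorem~\ref{t3.1} verbatim. Still, I outline the direct argument.)

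First I would fix a uniformity $\U$ generating the topology of $S$ and witnessing its left-uniformizability, so that every $U\in\U$ admits $V\in\U$ with $s\cdot B(t,V)\subset B(st,U)$ for all $s,t\in S$. Given $x\in S$ and a neighborhood $O_x$ of $x$, choose $U\in\U$ with $B(x,U)\subset O_x$, then a symmetric $V\in\U$ with $V\circ V\circ V\subset U$, and finally, using left-uniformizability for $V$, an entourage $W\in\U$ with $W\subset V$ and $s\cdot B(t,W)\subset B(st,V)$ for all $s,t\in S$.

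Then I would verify that the neighborhoods $U_x=B(x,W)$ of $x$ and $W_{\lambda(x)}=B(\lambda(x),W)\cap\lambda(S)$ of $\lambda(x)$ witness the dicontinuity of $\lambda$ at $x$. Recall that $U_x\rightthreetimes W_{\lambda(x)}$ consists of those $y\in S$ with $yw\in U_x$ for some $w\in W_{\lambda(x)}$. So take any $y\in (U_x\rightthreetimes W_{\lambda(x)})\cap\lambda^{-1}(W_{\lambda(x)})$: then $\lambda(y)\in W_{\lambda(x)}$ and $yw\in U_x$ for some $w\in W_{\lambda(x)}$. Since $w$ and $\lambda(y)$ both lie in $B(\lambda(x),W)$, and $y\lambda(y)=y$ because $\lambda$ is a right unit operation, the choice of $W$ yields
\[\{yw,\,y\}=\{yw,\,y\lambda(y)\}\subset y\cdot B(\lambda(x),W)\subset B(y\lambda(x),V),\]
whence $(yw,y)\in V\circ V$ (using the symmetry of $V$). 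Combining this with $yw\in U_x=B(x,W)$ gives
\[y\in B(yw,V\circ V)\subset B(B(x,W),V\circ V)\subset B(x,V\circ V\circ V)\subset B(x,U)\subset O_x,\]
which is precisely the condition in the definition of dicontinuity of $\lambda$ at $x$. The concluding assertion about left-uniformizable topological right unosemigroups is then immediate. Being a routine dualization, the argument has no real obstacle; the only points needing care are keeping the noncommutative compositions $V\circ V\circ V$ in the right order and correctly identifying which factor is quantified in $U_x\rightthreetimes W_{\lambda(x)}$.
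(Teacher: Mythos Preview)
Your dualization is correct and is exactly what the paper intends: the paper does not write out a separate proof of Theorem~\ref{t3.2} but simply states ``By analogy we can prove'', referring to the argument for Theorem~\ref{t3.1}. Your explicit requirement that $V$ be symmetric (needed to pass from $yw,y\in B(y\lambda(x),V)$ to $(yw,y)\in V\circ V$) is a small improvement in precision over the corresponding step in the paper's proof of Theorem~\ref{t3.1}.
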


Theorems~\ref{t3.1} and \ref{t3.2} imply:

\begin{theorem}\label{t4.3} A topological unosemigroup $(S,\lambda,\rho)$ is ditopological if the topological semigroup $S$ is both left-uniformizable and right-uniformizable.
\end{theorem}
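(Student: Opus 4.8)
The plan is to derive Theorem~\ref{t4.3} as an immediate consequence of the two preceding theorems, since the hypotheses have been tailored precisely so that this works. Suppose $(S,\lambda,\rho)$ is a topological unosemigroup whose underlying topological semigroup $S$ is both left-uniformizable and right-uniformizable. The goal is to show that $\lambda$ is a dicontinuous left unit operation and that $\rho$ is a dicontinuous right unit operation; once both are established, $(S,\lambda,\rho)$ is a ditopological unosemigroup by the very definition given in Section~\ref{s:di}.

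First I would apply Theorem~\ref{t3.1} to the continuous left unit operation $\lambda:S\to S$. Its hypothesis requires that $S$ be right-uniformizable, which holds by assumption; hence $\lambda$ is dicontinuous. Next I would apply Theorem~\ref{t3.2} to the continuous right unit operation $\rho:S\to S$; its hypothesis requires that $S$ be left-uniformizable, which again holds by assumption, so $\rho$ is dicontinuous. (One should keep the cross-sided pairing in mind: dicontinuity of the \emph{left} unit operation follows from \emph{right}-uniformizability, and dicontinuity of the \emph{right} unit operation from \emph{left}-uniformizability. This is why both uniformizability conditions are needed in the statement, even though only one might superficially seem relevant for each operation.)

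Having shown that $\lambda$ is dicontinuous and $\rho$ is dicontinuous, the topological semigroup $S$ together with these two operations satisfies the definition of a ditopological unosemigroup, so $(S,\lambda,\rho)$ is ditopological, completing the proof.

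There is essentially no obstacle here: all the substantive work is already done inside the proofs of Theorems~\ref{t3.1} and \ref{t3.2}, where the uniform structures are used to produce the required neighborhoods $U_x$ and $W_{\lambda(x)}$ (respectively $W_{\rho(x)}$) witnessing dicontinuity. The only point that deserves a moment's care is the matching of sides in the two invocations, so that the correct uniformizability hypothesis is fed to the correct theorem; beyond that, the argument is a two-line deduction and I would present it as such.
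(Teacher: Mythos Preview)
Your proposal is correct and matches the paper's own approach exactly: the paper simply states that Theorems~\ref{t3.1} and \ref{t3.2} imply Theorem~\ref{t4.3}, with no further argument. Your explicit unpacking of which uniformizability hypothesis feeds which theorem is precisely the intended reasoning.
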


Each compact Hausdorff space $X$ carries a unique uniformity generating its topology.
The uniform continuity of continuous maps defined on a compact Hausdorff space implies:

\begin{corollary}\label{c3.4} Each compact Hausdorff topological (left, right) unosemigroup is ditopological.
\end{corollary}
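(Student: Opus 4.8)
The plan is to deduce the corollary from Theorems~\ref{t3.1}, \ref{t3.2} and \ref{t4.3} by checking that every compact Hausdorff topological semigroup $S$ is simultaneously left-uniformizable and right-uniformizable. First I would recall the two classical facts about compact Hausdorff spaces that are being invoked: such a space $X$ carries a unique uniformity $\U_X$ generating its topology (namely the family of all neighborhoods of the diagonal in $X\times X$), and every continuous map from a compact Hausdorff uniform space to any uniform space is uniformly continuous. Since $S\times S$ is again compact Hausdorff, its unique uniformity has a base of entourages of the product form $V\times V$ with $V\in\U_S$; applying uniform continuity to the (continuous) multiplication $\cdot:S\times S\to S$ then yields: for every entourage $U\in\U_S$ there is an entourage $V\in\U_S$ such that $(x,x')\in V$ and $(y,y')\in V$ imply $(xy,x'y')\in U$.

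Next I would specialize this single inequality. Putting $y'=y$ gives that $x'\in B(x,V)$ implies $x'y\in B(xy,U)$, i.e. $B(x,V)\cdot y\subset B(xy,U)$ for all $x,y\in S$, which is precisely right-uniformizability of $S$ with respect to $\U_S$. Symmetrically, putting $x'=x$ gives $x\cdot B(y,V)\subset B(xy,U)$ for all $x,y\in S$, i.e. left-uniformizability. Hence a compact Hausdorff topological semigroup is both left- and right-uniformizable.

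Finally I would assemble the three cases. If $(S,\lambda)$ is a compact Hausdorff topological left unosemigroup, then $S$ is right-uniformizable, so the left unit operation $\lambda$ is dicontinuous by Theorem~\ref{t3.1}; the case of a compact Hausdorff topological right unosemigroup is dual via Theorem~\ref{t3.2}; and if $(S,\lambda,\rho)$ is a compact Hausdorff topological unosemigroup, then $S$ is both left- and right-uniformizable, so $(S,\lambda,\rho)$ is ditopological by Theorem~\ref{t4.3}.

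I do not expect a serious obstacle here: the argument is essentially bookkeeping around the two cited facts of uniform topology. The only points requiring a little care are to note that the uniformity witnessing the two uniformizability conditions must be the one generating the topology (which is automatic, since a compact Hausdorff space admits only one such uniformity), and to use that the product uniformity on $S\times S$ has a base of square entourages $V\times V$, so that the uniform continuity of multiplication can be recorded in the single-entourage form used above.
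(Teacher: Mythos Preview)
Your proof is correct and follows essentially the same approach as the paper: the paper's argument is the single sentence preceding the corollary, which invokes the unique compatible uniformity on a compact Hausdorff space and the uniform continuity of continuous maps from such spaces, leaving the reader to deduce left- and right-uniformizability and then apply Theorems~\ref{t3.1}, \ref{t3.2}, \ref{t4.3}. You have simply spelled out those steps in detail, including the specialization of the uniform continuity of multiplication to obtain the two uniformizability conditions.
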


Since discrete topological semigroups are (left and right) uniformizable by the discrete uniformity,
Theorem~\ref{t4.3} implies:

\begin{corollary}\label{c3.5} Each discrete topological (left, right) unosemigroup is ditopological.
\end{corollary}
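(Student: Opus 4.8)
The plan is to reduce the corollary to Theorems~\ref{t3.1}, \ref{t3.2} and~\ref{t4.3} by exhibiting an explicit uniformity witnessing the left- and right-uniformizability of an arbitrary discrete topological semigroup $S$. First I would recall that on a discrete space $S$ the collection $\U=\{U\subset S\times S:\Delta_S\subset U\}$, where $\Delta_S=\{(x,x):x\in S\}$ denotes the diagonal, is a uniformity, and that it generates the discrete topology, since $B(x,\Delta_S)=\{x\}$ is open for every $x\in S$.

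Next I would verify the two uniformizability inequalities with the simplest possible choice of refining entourage. Given any $U\in\U$, put $V=\Delta_S\in\U$. Then for all $x,y\in S$ we have $x\cdot B(y,V)=x\cdot\{y\}=\{xy\}=B(xy,\Delta_S)\subset B(xy,U)$, which is precisely the condition for left-uniformizability; symmetrically, $B(x,V)\cdot y=\{xy\}\subset B(xy,U)$ yields right-uniformizability. Hence every discrete topological semigroup is simultaneously left- and right-uniformizable.

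Finally I would invoke the results of Section~\ref{s3}. For a discrete topological left unosemigroup $(S,\lambda)$ the operation $\lambda$ is automatically continuous, and by Theorem~\ref{t3.1} it is dicontinuous on the right-uniformizable semigroup $S$, so $(S,\lambda)$ is a ditopological left unosemigroup; Theorem~\ref{t3.2} gives the analogous conclusion on the right; and Theorem~\ref{t4.3} covers the two-sided case of a discrete topological unosemigroup $(S,\lambda,\rho)$. This exhausts all three variants named in the statement.

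There is no genuine obstacle in this argument: the only ``content'' is the observation that the discrete uniformity satisfies the uniformizability inequalities with $V$ equal to the diagonal, which is immediate because every ball of radius $\Delta_S$ is a singleton. The corollary is thus a direct consequence of the machinery already developed, recorded here for completeness and as a counterpoint to the compact case treated in Corollary~\ref{c3.4}.
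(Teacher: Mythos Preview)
Your argument is correct and follows exactly the paper's approach: the paper simply observes that discrete topological semigroups are left- and right-uniformizable via the discrete uniformity and then invokes Theorem~\ref{t4.3}. Your version merely makes this observation explicit by writing down the discrete uniformity and verifying the defining inequalities with $V=\Delta_S$.
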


\section{Ditopological inverse semigroups}\label{s4}

Continuous left and right unit operations appear naturally in topological inverse semigroups.
Let us recall that a semigroup $S$ is called {\em inverse} if for each element $x\in S$ there is a unique element $x^{-1}\in S$
(called {\em the inverse to} $x$) such that $xx^{-1}x=x$ and $x^{-1}xx^{-1}=x^{-1}$.
 By a {\em topological inverse semigroup} we understand an inverse semigroup
$S$ endowed with a topology such that the semigroup operation $\cdot :S\times S\to S$ and the operation of inversion $(\,)^{-1}:S\to S$ are continuous.

Each topological inverse semigroup $S$ possesses the canonical
continuous left unit operation $$\lambda:S\to
S,\;\;\;\lambda:x\mapsto xx^{-1},$$ and the canonical continuous
right unit operation $$\rho:S\to S,\;\;\;\rho:x\mapsto
x^{-1}x,$$which turn $S$ into a topological unosemigroup.

Observe that the sets $\lambda(S)=\{xx^{-1}:x\in S\}$ and $\rho(S)=\{x^{-1}x:x\in S\}$ coincide with the set $E(S)=\{x\in S:xx=x\}$ of idempotents of $S$. It is well-known that the set $E(S)$ is a commutative subsemigroup of the inverse semigroup $S$.

A topological inverse semigroup $S$ is called a {\em ditopological inverse semigroup} if the topological unosemigroup $(S,\lambda,\rho)$ is ditopological.

The following proposition shows that the dicontinuity of the left unit operation on a topological inverse semigroup
is equivalent to the dicontinuity of the right unit operation.

\begin{proposition}\label{p4.2} Let $S$ be a topological inverse semigroup. The left unit operation $\lambda:S\to S$, $\lambda:x\mapsto xx^{-1}$,
is dicontinuous at a point $x\in S$ if and only if the right unit
operation $\rho:S\to S$, $\rho:x\mapsto x^{-1}x$, is dicontinuous
at the point $x^{-1}\in S$.
\end{proposition}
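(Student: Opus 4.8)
The plan is to exploit the symmetry between $\lambda$ and $\rho$ provided by the inversion map $(\,)^{-1}:S\to S$, which is a homeomorphism (it is continuous and its own inverse) and satisfies $\lambda(x)=xx^{-1}=(x^{-1})^{-1}x^{-1}=\rho(x^{-1})$ for all $x\in S$. Thus $\rho=(\,)^{-1}\circ\lambda\circ(\,)^{-1}$ and, since $E(S)=\lambda(S)=\rho(S)$ consists of idempotents fixed by inversion, the map $(\,)^{-1}$ restricts to the identity on $\lambda(S)$ and on $\rho(S)$. By symmetry it suffices to prove one implication, say that dicontinuity of $\lambda$ at $x$ implies dicontinuity of $\rho$ at $x^{-1}$; the converse follows by replacing $x$ with $x^{-1}$ and swapping the roles of $\lambda$ and $\rho$.

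The key computational step is to translate the divisor sets under inversion. For $a,b\in S$ one has $x\in a\leftthreetimes b$ iff $ax=b$ iff $x^{-1}a^{-1}=b^{-1}$ iff $x^{-1}\in b^{-1}\rightthreetimes a^{-1}$, so applying inversion to the set $a\leftthreetimes b$ yields exactly $b^{-1}\rightthreetimes a^{-1}$. Passing to subsets, $(A\leftthreetimes B)^{-1}=B^{-1}\rightthreetimes A^{-1}$ where $C^{-1}=\{c^{-1}:c\in C\}$. Also $\lambda^{-1}(W)$ is carried by inversion to $\rho^{-1}(W^{-1})$, using $\rho(y^{-1})=\lambda(y)$. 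Hence applying the homeomorphism $(\,)^{-1}$ to the defining inclusion for $\lambda$ at $x$,
$$(W_{\lambda(x)}\leftthreetimes U_x)\cap\lambda^{-1}(W_{\lambda(x)})\subset O_x,$$
transforms it into
$$\bigl(W_{\lambda(x)}^{-1}\rightthreetimes U_x^{-1}\bigr)\cap\rho^{-1}\bigl(W_{\lambda(x)}^{-1}\bigr)\subset O_x^{-1},$$
where $W_{\lambda(x)}^{-1}=W_{\lambda(x)}\subset\lambda(S)=\rho(S)$ since $W_{\lambda(x)}$ lies in the set of idempotents, which is pointwise fixed by inversion.

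So given a neighborhood $O'$ of $x^{-1}$, I would set $O_x=(O')^{-1}$ (a neighborhood of $x$ since inversion is a homeomorphism), apply dicontinuity of $\lambda$ at $x$ to get $U_x$ and $W_{\lambda(x)}$, and then take $U_{x^{-1}}=U_x^{-1}$ and $W_{\rho(x^{-1})}=W_{\lambda(x)}^{-1}=W_{\lambda(x)}$ — noting $\rho(x^{-1})=\lambda(x)$ so this is indeed a neighborhood of $\rho(x^{-1})$ in $\rho(S)$. The displayed inclusion above is then precisely the condition witnessing dicontinuity of $\rho$ at $x^{-1}$. I expect the only mild obstacle to be bookkeeping: carefully checking that $W_{\lambda(x)}$, being a neighborhood of the idempotent $\lambda(x)$ inside $\lambda(S)$, is genuinely invariant under inversion and hence a legitimate neighborhood of $\rho(x^{-1})$ inside $\rho(S)$, and that each set-theoretic identity for $\leftthreetimes$, $\rightthreetimes$, and the preimages $\lambda^{-1}$, $\rho^{-1}$ survives the application of the inversion homeomorphism without any loss. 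None of these is deep, but they must be stated cleanly for the symmetry argument to close.
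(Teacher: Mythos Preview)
Your approach is correct and essentially identical to the paper's proof, which likewise applies the inversion homeomorphism to transport the $\lambda$-dicontinuity witnesses $U_x$, $W_{\lambda(x)}$ at $x$ to $\rho$-dicontinuity witnesses $U_x^{-1}$, $W_{\lambda(x)}^{-1}$ at $x^{-1}$, using $\rho(x^{-1})=\lambda(x)$ and the behavior of $\leftthreetimes$, $\rightthreetimes$ under inversion. One transcription slip to fix: in your displayed transformed inclusion the arguments of $\rightthreetimes$ are swapped; applying your own identity $(A\leftthreetimes B)^{-1}=B^{-1}\rightthreetimes A^{-1}$ with $A=W_{\lambda(x)}$ and $B=U_x$ yields $U_x^{-1}\rightthreetimes W_{\lambda(x)}^{-1}$, which is precisely the form $(U_{x^{-1}}\rightthreetimes W_{\rho(x^{-1})})$ required by the definition of dicontinuity of $\rho$.
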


\begin{proof} Assume that the left unit operation $\lambda$ is dicontinuous at a point $x\in X$. To show that the right unit operation $\rho$
is dicontinuous at the point $x^{-1}$, fix a neighborhood
$O_{x^{-1}}\subset S$ of $x^{-1}$. Then
$O_{x^{-1}}^{-1}=\{y^{-1}:y\in O_{x^{-1}}\}$ is a neighborhood  of
the point $x$ in $S$. By the dicontinuity of the left unit
operation $\lambda$ on $S$, the points $x$ and
$\lambda(x)=xx^{-1}$ have neighborhoods $U_{x}\subset S$ and $W_{xx^{-1}}\subset E(S)$
such that $(W_{xx^{-1}}\leftthreetimes U_{x})\cap
\lambda^{-1}(W_{xx^{-1}})\subset O_{x^{-1}}^{-1}$. After the
inversion the latter inclusion turns into
$(U_{x}^{-1}\rightthreetimes W_{xx^{-1}}^{-1})\cap
\rho^{-1}(W_{xx^{-1}})\subset O_{x^{-1}}$. So,
$U_{x^{-1}}=U_{x}^{-1}$ and $W_{\rho(x^{-1})}=W_{xx^{-1}}^{-1}\cap
W_{xx^{-1}}$ are neighborhoods of the points $x^{-1}$ and
$\rho(x^{-1})=xx^{-1}$ witnessing the dicontinuity of the right
unit operation $\rho$ at $x^{-1}$.

By analogy we can prove that the dicontinuity of the right unit operation $\rho$ at $x^{-1}\in S$ implies the dicontinuity
of the left unit operation $\lambda$ at the point $x$.
\end{proof}

By a {\em topological semilattice} we understand a commutative idempotent topological semigroup $S$.
Each topological semilattice is a topological inverse semigroup with $x^{-1}=x$ for all $x\in S$.

\begin{theorem}\label{t4.2} The class of ditopological inverse semigroups includes all topological groups, all topological
semilattices, all compact Hausdorff topological inverse semigroups, and all discrete topological inverse semigroups.
\end{theorem}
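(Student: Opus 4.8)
The plan is to derive Theorem~\ref{t4.2} as a more or less immediate consequence of the machinery already assembled in Sections~\ref{s3} and \ref{s4}, treating each of the four classes in turn. The only genuine work is to check, for each class, that the hypotheses of one of the earlier results are met; the conceptual content has already been packaged there.

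First I would handle topological groups. A topological group $G$, viewed as a topological inverse semigroup, has $x^{-1}$ in the usual group sense, and $\lambda(x)=xx^{-1}=e$, $\rho(x)=x^{-1}x=e$ are the constant maps to the identity, which are trivially continuous. The remark preceding Theorem~\ref{t3.1} records that every topological group is both left- and right-uniformizable (by its left and right uniformities). Hence Theorem~\ref{t4.3} applies and $(G,\lambda,\rho)$ is ditopological, so $G$ is a ditopological inverse semigroup.

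Next, topological semilattices. A topological semilattice $S$ is commutative and idempotent, hence (as noted) a topological inverse semigroup with $x^{-1}=x$; then $\lambda(x)=xx^{-1}=x$ and $\rho(x)=x^{-1}x=x$, so $(S,\lambda,\rho)$ is an \emph{idempotent} topological unosemigroup. By Propositions~\ref{p:LI} and \ref{p:RI} (as already observed right after Proposition~\ref{p:RI}), an idempotent topological unosemigroup is ditopological; thus $S$ is a ditopological inverse semigroup. Compact Hausdorff topological inverse semigroups are covered by Corollary~\ref{c3.4} (a compact Hausdorff topological unosemigroup is ditopological, applied to the canonical $(S,\lambda,\rho)$), and discrete topological inverse semigroups are covered by Corollary~\ref{c3.5} in exactly the same way. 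Assembling the four cases completes the proof.

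I do not expect any real obstacle here, since every case reduces to an already-proved statement; the only thing to be careful about is bookkeeping — making sure that in each case the operations being fed to the earlier theorems are precisely the \emph{canonical} left and right unit operations $\lambda:x\mapsto xx^{-1}$ and $\rho:x\mapsto x^{-1}x$ of the inverse semigroup, so that ditopologicality of the unosemigroup $(S,\lambda,\rho)$ is by definition ditopologicality of $S$ as an inverse semigroup. (One could also invoke Proposition~\ref{p4.2} to halve the work in the group and semilattice cases, checking dicontinuity of $\lambda$ only, but this is not needed.)
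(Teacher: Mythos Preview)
Your proposal is correct and follows essentially the same route as the paper. The only minor difference is in the topological group case: the paper first gives an even more direct verification (since $\lambda(G)=\rho(G)=\{e\}$ is a singleton, one can take $U_x=O_x$ and $W_{\lambda(x)}=\{e\}$ and check the dicontinuity condition by hand, invoking Proposition~\ref{p4.2} to halve the work), and only afterwards remarks that the same conclusion also follows from right-uniformizability and Theorem~\ref{t3.1}; you go straight to the uniformizability argument, which the paper itself acknowledges as a valid alternative.
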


\begin{proof} Let $G$ be a topological group. In this case $\lambda$ and $\rho$ are constant operations assigning to each $x\in G$
the unique idempotent $e$ of the group $G$, so $\lambda(G)=\rho(G)=\{e\}$ is a singleton. By Proposition~\ref{p4.2}, to prove that $G$
is a ditopological inverse semigroup,
it suffices to check that the left unit operation $\lambda$ is dicontinuous at each point $x\in G$. Given any neighborhood
$O_x\subset G$ of $x$, put $U_x=O_x$ and $W_{\lambda(x)}=\{e\}$ and observe that
$$(W_{\lambda(x)}\leftthreetimes U_x)\cap\lambda^{-1}(W_{\lambda(x)})=(\{e\}\leftthreetimes U_x)\cap G=U_x=O_x,$$which means that the left unit operation $\lambda$ is discontinuous at $x$.
So, the topological group $G$ is ditopological.
The same conclusion can be also derived from the
right-uniformizabity of topological groups and Theorem~\ref{t3.1}.
\vskip5pt

Each topological semilattice, being an idempotent topological unosemigroup, is a ditopological unosemigroup according to Propositions~\ref{p:LI} and \ref{p:RI}.
Corollaries~\ref{c3.4} and \ref{c3.5} imply that the class of ditopological
inverse semigroups contains all compact Hausdorff topological inverse
semigroups and all discrete topological inverse semigroups.
\end{proof}

Now we present a simple example of a locally compact commutative
topological inverse semigroup $S$ which is not ditopological.

\begin{example}\label{bool} There exists a commutative topological inverse semigroup $S$ such that
\begin{enumerate}
\item $S$ is countable, metrizable, and locally compact;
\item the idempotent semilattice $E(S)$ of $S$ is compact and each subgroup of $S$
has cardinality $\le 2$;
\item the left unit operation $\lambda:S\to S$, $\lambda:x\mapsto xx^{-1}$, is not dicontinuous,
which implies that the topological Clifford semigroup $S$ is not ditopological.
\end{enumerate}
\end{example}

\begin{proof} Let $H$ be a two-element group and  $T=\{0\}\cup\{\frac1n\}_{n\in\mathbb N}\subset\IR$
be a convergent sequence endowed with the semilattice operation
$$xy=\begin{cases}
x&\mbox{if $x=y$},\\
0&\mbox{otherwise}.
\end{cases}
$$Then the product $T\times H$ is a commutative inverse semigroup whose idempotent semilattice
coincides with the set $E=T\times\{e\}$ where $e$ is the idempotent of the group $H$.  Let $h$ be the non-identity element of the two-element group $H$.

Endow the inverse semigroup $S=T\times H$ with the topology $\tau$
which induces the  original (compact metrizable) topology on the
set $T\times\{e\}$ and the discrete topology on $T\times\{h\}$. It
is easy to see that the topology $\tau$ is metrizable, locally
compact, and turns $S$ into a topological inverse semigroup.

We claim that the left unit operation $\lambda:S\to S$, $\lambda:x\mapsto xx^{-1}$, is not dicontinuous at the point $x=(0,h)\in
T\times H$. Assuming the opposite, for the neighborhood
$O_x=\{x\}$ we would find neighborhoods $U_x\subset S$ of $x$ and
$W_{\lambda(x)}\subset\lambda(S)$ of the idempotent $\lambda(x)=(0,e)$ such that
$$(W_{\lambda(x)}\leftthreetimes U_x)\cap \lambda^{-1}(W_{\lambda(x)})\subset O_x=\{x\}.$$
By the definition
of the topology on $S$, the neighborhood $W_{\lambda(x)}$ contains a
point $(\frac1n,e)$ for some $n\in\IN$. It follows from $(0,e)\cdot (\frac1n,h)=(0,h)=x\in U_x$ and $\lambda\big((\frac1n,h)\big))=(\frac1n,e)\in W_{\lambda(x)}$ that
$$(\tfrac1n,h)\in (W_{\lambda(x)}\leftthreetimes U_x)\cap \lambda^{-1}(W_{\lambda(x)})\subset\{x\}=\{(0,h)\}$$ and hence $\frac1n=0$,  which is a desired contradiction.
\end{proof}

\section{Operations over ditopological unosemigroups}\label{s:Op}

As we know from Theorem~\ref{t4.2}, the class of ditopological unosemigroups contains all topological groups, all topological semilattices,
and all compact Hausdorff topological inverse semigroups. In
this section we shall show that this class is stable under many
natural operations over topological unosemigroups.

\subsection{Subunosemigroups of topological (left, right) unosemigroups}
Let $(S,\lambda)$ be a topological left unosemigroup and $X\subset S$ be a subsemigroup such that $\lambda(X)\subset X$. Then $\lambda|X:X\to X$ is a continuous left unit operation on $X$, which implies that $(X,\lambda|X)$ is a topological left unosemigroup. Such left unosemigroup will be called a {\em left subunosemigroup} of $(S,\lambda)$.

By analogy we can define {\em right subunosemigroups} of topological right unosemigroups and {\em subunosemi\-groups} of topological unosemigroups.

Since the dicontinuity of a left or right unit operation $u:S\to
S$ on a topological semigroup $S$ implies the dicontinuity of the
restriction $u|X$ to any subsemigroup $X\subset S$ with
$u(X)\subset X$, we get:

\begin{proposition} If a topological (left,right) unosemigroup $S$ is ditopological, then so is any its subunosemigroup $X\subset S$.
\end{proposition}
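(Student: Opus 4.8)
The plan is to reduce the claim to the statement, already noted in the paragraph preceding the proposition, that dicontinuity of a unit operation passes to subsemigroups invariant under that operation; so the real content is just to verify that set-theoretic assertion carefully, paying attention to the fact that the division operation $\leftthreetimes$ is computed \emph{inside the ambient semigroup}, not inside the subsemigroup. I would treat the left case in detail and then remark that the right case is symmetric and the two-sided case follows by applying the two one-sided results to $\lambda$ and $\rho$ separately.

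First I would fix a topological left unosemigroup $(S,\lambda)$ that is ditopological, a subsemigroup $X\subset S$ with $\lambda(X)\subset X$, and the restricted operation $\mu=\lambda|X:X\to X$. I would observe at the outset that $\mu(X)=\lambda(X)\subset\lambda(S)$ and that the subspace topology on $\mu(X)$ induced from $X$ agrees with the one induced from $\lambda(S)$, both being subspaces of $S$; this lets me freely move between neighborhoods in $\lambda(S)$ and in $\mu(X)$. Next, given $x\in X$ and a neighborhood $O^X_x\subset X$ of $x$ in $X$, I would write $O^X_x=O_x\cap X$ for some neighborhood $O_x\subset S$ of $x$, apply dicontinuity of $\lambda$ at $x$ to obtain neighborhoods $U_x\subset S$ and $W_{\lambda(x)}\subset\lambda(S)$ with $(W_{\lambda(x)}\leftthreetimes U_x)\cap\lambda^{-1}(W_{\lambda(x)})\subset O_x$, and then set $U^X_x=U_x\cap X$ and $W^X_{\mu(x)}=W_{\lambda(x)}\cap\mu(X)=W_{\lambda(x)}\cap X$. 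These are neighborhoods of $x$ in $X$ and of $\mu(x)$ in $\mu(X)$.

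The one step that needs a genuine (if short) argument is the inclusion
$$(W^X_{\mu(x)}\mathbin{\leftthreetimes}_X U^X_x)\cap \mu^{-1}(W^X_{\mu(x)})\subset O^X_x,$$
where $\leftthreetimes_X$ denotes division computed in $X$. Here I would note the key monotonicity: for $A,B\subset X$ one has $A\mathbin{\leftthreetimes}_X B=\{y\in X: \exists a\in A,\ ay\in B\}\subset (A\leftthreetimes B)\cap X$, because any witness $y\in X$ of membership in the left-hand side witnesses membership in $A\leftthreetimes B$ as well, and of course $y\in X$. Likewise $\mu^{-1}(W^X_{\mu(x)})=\lambda^{-1}(W_{\lambda(x)})\cap X$. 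Combining these, the left-hand side of the displayed inclusion is contained in $\big((W_{\lambda(x)}\leftthreetimes U_x)\cap\lambda^{-1}(W_{\lambda(x)})\big)\cap X\subset O_x\cap X=O^X_x$, which is exactly what dicontinuity of $\mu$ at $x$ demands. Since $x\in X$ was arbitrary, $\mu$ is dicontinuous, so $(X,\mu)$ is a ditopological left unosemigroup.

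Finally, for the right case I would run the identical argument with $\rho$, $\rightthreetimes$, and the inclusion $(U_x\rightthreetimes W_{\rho(x)})\cap\rho^{-1}(W_{\rho(x)})\subset O_x$ in place of the left data; and for a ditopological unosemigroup $(S,\lambda,\rho)$ the subunosemigroup $(X,\lambda|X,\rho|X)$ inherits both dicontinuities by the two one-sided cases. I do not expect any real obstacle here; the only thing to be careful about is precisely the remark above that the division sets $\leftthreetimes$ and $\rightthreetimes$ shrink (rather than being computed afresh) when one passes to a subsemigroup, so that the ambient inclusion restricts correctly.
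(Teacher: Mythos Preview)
Your proof is correct and follows exactly the approach the paper takes, which simply asserts (in the sentence preceding the proposition, without a written proof) that dicontinuity of a unit operation restricts to any invariant subsemigroup; you have faithfully supplied the routine details. One harmless slip: the equality $W_{\lambda(x)}\cap\mu(X)=W_{\lambda(x)}\cap X$ need not hold in general (a point of $\lambda(S)\cap X$ need not lie in $\lambda(X)$), but your argument only uses the inclusion $W^X_{\mu(x)}\subset W_{\lambda(x)}$ and the automatic fact that $\mu(y)\in\mu(X)$ for $y\in X$, so nothing is affected.
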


\subsection{Tychonoff products of topological (left, right) unosemigroups}
For any family of topological left unosemigroups $(S_\alpha,\lambda_\alpha)$, $\alpha\in A$,
the Tychonoff product $S=\prod_{\alpha\in A}S_\alpha$ carries a natural structure of a topological left
 unosemigroup endowed with the left unit operation $\lambda:S\to S$, $\lambda:(x_\alpha)_{\alpha\in A}\mapsto\big(\lambda_\alpha(x_\alpha)\big)_{\alpha\in A}$.
  By analogy we can define the operation of Tychonoff product of (right) unosemigroups.

The dicontinuity of left (right) unit operations on the topological semigroups $S_\alpha$, $\alpha\in A$, implies the
dicontinuity of the left (right) unit operation on their Tychonoff product. This proves our next simple:

\begin{proposition} The Tychonoff product of ditopological (left, right) unosemigroups is a ditopological (left, right) unosemigroup.
\end{proposition}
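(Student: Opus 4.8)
The plan is to deduce the dicontinuity of the coordinatewise left unit operation $\lambda\colon(x_\alpha)_{\alpha\in A}\mapsto(\lambda_\alpha(x_\alpha))_{\alpha\in A}$ on $S=\prod_{\alpha\in A}S_\alpha$ from the dicontinuity of the factor operations $\lambda_\alpha$, exploiting that a basic neighborhood in a Tychonoff product restricts only finitely many coordinates. Before that I would record two coordinatewise facts. First, the image $\lambda(S)=\prod_{\alpha\in A}\lambda_\alpha(S_\alpha)$ carries the product of the subspace topologies of the $\lambda_\alpha(S_\alpha)$, so the canonical projections $\pr_\alpha$ restrict $\lambda(S)$-neighborhoods of $\lambda(x)$ to $\lambda_\alpha(S_\alpha)$-neighborhoods of $\lambda_\alpha(x_\alpha)$ and conversely. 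Second, for any subsets $W,U\subset S$ and any $\alpha\in A$ one has $\pr_\alpha(W\leftthreetimes U)\subset \pr_\alpha(W)\leftthreetimes\pr_\alpha(U)$ (indeed, if $wy=u$ with $w\in W$ and $u\in U$, then $w_\alpha y_\alpha=u_\alpha$) and $\pr_\alpha(\lambda^{-1}(W))\subset\lambda_\alpha^{-1}(\pr_\alpha(W))$.

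Then I would fix a point $x=(x_\alpha)_{\alpha\in A}\in S$ and a neighborhood $O_x\subset S$ of $x$. Shrinking $O_x$ if necessary, I may assume $O_x=\bigcap_{\alpha\in F}\pr_\alpha^{-1}(O_{x_\alpha})$ for a finite set $F\subset A$ and neighborhoods $O_{x_\alpha}\subset S_\alpha$ of $x_\alpha$. Applying the dicontinuity of each $\lambda_\alpha$ at $x_\alpha$ for $\alpha\in F$, I obtain neighborhoods $U_{x_\alpha}\subset S_\alpha$ of $x_\alpha$ and $W_{\lambda_\alpha(x_\alpha)}\subset\lambda_\alpha(S_\alpha)$ of $\lambda_\alpha(x_\alpha)$ with $(W_{\lambda_\alpha(x_\alpha)}\leftthreetimes U_{x_\alpha})\cap\lambda_\alpha^{-1}(W_{\lambda_\alpha(x_\alpha)})\subset O_{x_\alpha}$. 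I then set $U_x=\bigcap_{\alpha\in F}\pr_\alpha^{-1}(U_{x_\alpha})$, a neighborhood of $x$ in $S$, and $W_{\lambda(x)}=\lambda(S)\cap\bigcap_{\alpha\in F}\pr_\alpha^{-1}(W_{\lambda_\alpha(x_\alpha)})$, which by the first remark is a neighborhood of $\lambda(x)$ in the subspace $\lambda(S)$.

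To finish, I would verify $(W_{\lambda(x)}\leftthreetimes U_x)\cap\lambda^{-1}(W_{\lambda(x)})\subset O_x$. If $y$ lies in the left-hand side, then $\lambda(y)\in W_{\lambda(x)}$ and $wy\in U_x$ for some $w\in W_{\lambda(x)}$; for each $\alpha\in F$ this gives $w_\alpha\in W_{\lambda_\alpha(x_\alpha)}$, $w_\alpha y_\alpha\in U_{x_\alpha}$ and $\lambda_\alpha(y_\alpha)\in W_{\lambda_\alpha(x_\alpha)}$, hence $y_\alpha\in(W_{\lambda_\alpha(x_\alpha)}\leftthreetimes U_{x_\alpha})\cap\lambda_\alpha^{-1}(W_{\lambda_\alpha(x_\alpha)})\subset O_{x_\alpha}$, so $y\in O_x$. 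The right-unosemigroup and two-sided cases are verbatim the same with $\rightthreetimes$ and $\rho$ in place of $\leftthreetimes$ and $\lambda$. There is no genuine obstacle here; the only point that deserves care is that $W_{\lambda(x)}$ is a bona fide neighborhood of $\lambda(x)$ inside $\lambda(S)$, which is exactly why I record $\lambda(S)=\prod_\alpha\lambda_\alpha(S_\alpha)$ as topological spaces at the outset — everything else is the coordinatewise transport of the defining inclusion through the two elementary facts above.
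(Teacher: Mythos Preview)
Your proof is correct and is precisely the natural coordinatewise verification that the paper has in mind; the paper itself does not spell out a proof, merely remarking before the proposition that ``the dicontinuity of left (right) unit operations on the topological semigroups $S_\alpha$, $\alpha\in A$, implies the dicontinuity of the left (right) unit operation on their Tychonoff product.'' Your care in noting that $\lambda(S)=\prod_{\alpha}\lambda_\alpha(S_\alpha)$ as subspaces (so that $W_{\lambda(x)}$ really is a neighborhood of $\lambda(x)$ in $\lambda(S)$) is exactly the one subtlety worth recording.
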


\subsection{The reduced product of topological (left, right) unosemigroups}
Let $X,Y$ be two topological semigroups and $I\subset X$ be a closed two-sided ideal in $X$.
By the {\em reduced product} $X\times_{I}Y$ of the semigroups $X$ and $Y$ over the ideal $I$ we mean
the set $I\cup ((X\setminus I)\times Y)$ endowed with the smallest topology
such that
\begin{itemize}
\item the map $(X\setminus I)\times Y\hookrightarrow X\times_{I}Y$
is a topological embedding,
\item the projection
$\pi:X\times_{I}Y\rightarrow X$ is continuous.
\end{itemize}
Here $$\pi(z)=\begin{cases}
z&\mbox{if $z\in I$,}\\
x&\mbox{if $z=(x,y)\in (X\setminus I)\times Y$,}
\end{cases}
$$for any $z\in X\times_I Y$.

The semigroup operation on $X\times_I Y$ is defined as a unique binary operation on $X\times_I Y$ such that
the projection $q:X\times Y\to X\times_I Y$ defined by
$$q(x,y)=\begin{cases}
x&\mbox{if $x\in I$};\\
(x,y)&\mbox{otherwise}
\end{cases}
$$
is a semigroup homomorphism.

If the semigroups $X$ and $Y$ carry continuous left unit operations $\lambda_X:X\to X$ and $\lambda_Y:Y\to Y$
such that $\lambda_X(I)\subset I$, then the reduced product $X\times_I Y$ carries a natural left unit operation $\lambda:X\times_I Y\to X\times_I Y$
defined by the formula:
$$\lambda(z)=\begin{cases}
\lambda_X(z)&\mbox{if $z\in I$}\\
(\lambda_X(x),\lambda_Y(y))&\mbox{if $z=(x,y)\in (X\setminus I)\times Y$}.
\end{cases}
$$This formula is well-defined since for every $x\in X\setminus I$ the equality $x=\lambda_X(x)\cdot x\notin I$ implies that $\lambda_X(x)\notin I$.

The semigroup $X\times_I Y$ endowed with the left unit operation $\lambda$ is a topological left unosemigroup called the {\em reduced product}
of the topological left unosemigroups $(X,\lambda_X)$ and $(Y,\lambda_Y)$.

\begin{theorem}\label{RPL} If the topological left unosemigroups $\mathbf X=(X,\lambda_X)$ and $\mathbf Y=(Y,\lambda_Y)$ are ditopological, then
so is their reduced product $\mathbf X\times_I\mathbf Y=(X\times_I Y,\lambda)$.
\end{theorem}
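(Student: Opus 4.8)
The plan is to verify directly, from the definition of dicontinuity, that the left unit operation $\lambda$ on $Z:=X\times_I Y$ is dicontinuous at every point $z\in Z$, treating separately the two cases $z\in I$ and $z\in(X\setminus I)\times Y$. Before starting I would record three structural facts about the reduced product. First, the projection $\pi:Z\to X$ is a continuous semigroup homomorphism: it is continuous by construction, and it is a homomorphism because $\pi\circ q=\pr_X$ with $q:X\times Y\to Z$ a surjective homomorphism. Second, since $(X\setminus I)\times Y=\pi^{-1}(X\setminus I)$ and $I$ is closed, the set $(X\setminus I)\times Y$ is open in $Z$ and carries the product topology, so a neighborhood base at a point $z=(x,y)$ with $x\notin I$ is given by boxes $U\times V$ with $U\subset X\setminus I$, while for $z\in I$ the only subbasic open sets containing $z$ are of the form $\pi^{-1}(U)$, whence a neighborhood base at such $z$ is $\{\pi^{-1}(U):U\text{ a neighborhood of }z\text{ in }X\}$. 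Third, from the standing hypotheses $\lambda_X(I)\subset I$ and $\lambda_X(X\setminus I)\cap I=\varnothing$ one gets $\lambda_X(X\setminus I)=\lambda_X(X)\cap(X\setminus I)$, the identity $\lambda(Z)\cap\big((X\setminus I)\times Y\big)=\lambda_X(X\setminus I)\times\lambda_Y(Y)$, and the relation $\pi\circ\lambda=\lambda_X\circ\pi$ together with $\pi(\lambda(Z))\subset\lambda_X(X)$.

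Case $z=(x,y)$ with $x\notin I$. Then $\lambda(z)=(\lambda_X(x),\lambda_Y(y))$ again lies in the open set $(X\setminus I)\times Y$. Given a basic neighborhood $O_x\times O_y$ of $z$ with $O_x\subset X\setminus I$, I would apply the dicontinuity of $\lambda_X$ at $x$ and of $\lambda_Y$ at $y$ to get neighborhoods $U^X_x, W^X_{\lambda_X(x)}=\tilde W\cap\lambda_X(X)$ (with $\tilde W\subset X\setminus I$ open) and $U^Y_y, W^Y_{\lambda_Y(y)}=\hat W\cap\lambda_Y(Y)$ with the defining one-sided division inclusions into $O_x$ and $O_y$. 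Then set $U_z=U^X_x\times U^Y_y$ and $W_{\lambda(z)}=(\tilde W\times\hat W)\cap\lambda(Z)$, which is a neighborhood of $\lambda(z)$ in $\lambda(Z)$ by the facts above. To check the containment, take any $z'\in(W_{\lambda(z)}\leftthreetimes U_z)\cap\lambda^{-1}(W_{\lambda(z)})$; since $\lambda(z')\in W_{\lambda(z)}\subset(X\setminus I)\times Y$ one has $z'\notin I$, so $z'=(x',y')$, and likewise the witnessing element $w$ lies in $(X\setminus I)\times Y$; computing $wz'=q(\pi(w)x',\ldots)$ and using that $wz'\in U_z\subset(X\setminus I)\times Y$ forces $wz'=(\pi(w)x',\pi_Y(w)y')$ with $\pi(w)x'\in U^X_x$ and the $Y$-coordinate in $U^Y_y$. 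Peeling off coordinates and using $\pi(w)\in\lambda_X(X)$, $\lambda_X(x')\in\lambda_X(X)$ reduces the statement to $x'\in(W^X_{\lambda_X(x)}\leftthreetimes_X U^X_x)\cap\lambda_X^{-1}(W^X_{\lambda_X(x)})\subset O_x$ and the analogous inclusion for $y'$, giving $z'\in O_x\times O_y$.

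Case $z\in I$. Now $\lambda(z)=\lambda_X(z)\in I$, and I may take the given neighborhood of $z$ to be $\pi^{-1}(O_x)$ with $O_x$ a neighborhood of $z$ in $X$. Applying the dicontinuity of $\lambda_X$ at $z$ yields $U^X_z$ and $W^X_{\lambda_X(z)}=\tilde W\cap\lambda_X(X)$ (with $\tilde W\subset X$ open) such that $(W^X_{\lambda_X(z)}\leftthreetimes_X U^X_z)\cap\lambda_X^{-1}(W^X_{\lambda_X(z)})\subset O_x$. I would then put $U_z=\pi^{-1}(U^X_z)$ and $W_{\lambda(z)}=\pi^{-1}(\tilde W)\cap\lambda(Z)$. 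For $z'\in(W_{\lambda(z)}\leftthreetimes U_z)\cap\lambda^{-1}(W_{\lambda(z)})$ with witnessing $w\in W_{\lambda(z)}$, $wz'\in U_z$, applying $\pi$ and using that $\pi$ is a homomorphism with $\pi\circ\lambda=\lambda_X\circ\pi$ and $\pi(w)\in\lambda_X(X)$ gives $\pi(w)\in W^X_{\lambda_X(z)}$, $\pi(w)\pi(z')\in U^X_z$, and $\lambda_X(\pi(z'))\in W^X_{\lambda_X(z)}$, hence $\pi(z')\in(W^X_{\lambda_X(z)}\leftthreetimes_X U^X_z)\cap\lambda_X^{-1}(W^X_{\lambda_X(z)})\subset O_x$, so $z'\in\pi^{-1}(O_x)$. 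This argument is uniform in whether or not $z'\in I$, which is what makes the case of points of $I$ manageable.

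The routine but slightly delicate part of the write-up will be the topological bookkeeping near $I$: confirming that a neighborhood of $\lambda(z)$ in the subspace $\lambda(Z)$ is genuinely captured by $\pi^{-1}(\tilde W)\cap\lambda(Z)$, that no unexpected open sets of $Z$ appear near $I$, and that a product $q(a,b)\cdot q(x',y')$ in $Z$ is a genuine pair exactly when its first coordinate stays outside $I$. The one genuinely substantive ingredient, on which the whole argument rests, is that $\pi$ is a continuous homomorphism intertwining $\lambda$ and $\lambda_X$ with $\pi(\lambda(Z))\subset\lambda_X(X)$; once this is established, the dicontinuity inequality on $Z$ collapses, coordinatewise off $I$ and through $\pi$ on $I$, to the dicontinuity inequalities already assumed for $\lambda_X$ and $\lambda_Y$.
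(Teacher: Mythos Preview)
Your proposal is correct and follows essentially the same route as the paper: the same two-case split on whether $z\in I$ or $z\in(X\setminus I)\times Y$, with the first case handled coordinatewise via the dicontinuity of $\lambda_X$ and $\lambda_Y$, and the second by pushing everything through the projection $\pi$. Your write-up is in fact slightly more careful than the paper's in isolating the structural facts ($\pi$ a homomorphism, $\pi\circ\lambda=\lambda_X\circ\pi$, $\pi(\lambda(Z))\subset\lambda_X(X)$) that make the $z\in I$ case go through.
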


\begin{proof} Assume that the topological left unosemigroups
$\mathbf X=(X,\lambda_X)$ and $\mathbf Y=(Y,\lambda_Y)$ are
ditopological. To show that $X\times_I Y$ is a ditopological left
unosemigroup, fix a point $z\in X\times_{I}Y$ and a neighborhood
$O_{z}\subset X\times_{I}Y$ of $z$. We divide the proof into two
parts.

First assume that $z=(x,y)\in (X\setminus I)\times Y$. In this
case we can assume that $O_{z}=O_{x}\times O_{y}$ for some open
neighborhoods $O_{x}\subset X\setminus I$ and $O_{y}\subset Y$ of
the points $x\in X\setminus I$ and $y\in Y$, respectively. Since
$X$ and $Y$ are left ditopological unosemigroups, there are open
neighborhoods $U_{x}\subset X$, $W_{\lambda_X(x)}\subset\lambda_X(X)$ of the points $x$,
$\lambda_X(x)$ and $U_{y}\subset Y$, $W_{\lambda_Y(y)}\subset\lambda_Y(Y)$ of $y$, $\lambda_Y(y)$ such that $(W_{\lambda_X(x)}\leftthreetimes
U_x)\cap \lambda_X^{-1}(W_{\lambda_X(x)})\subset O_x$ and
$(W_{\lambda_Y(y)}\leftthreetimes
U_y)\cap\lambda_Y^{-1}(W_{\lambda_Y(y)})\subset O_y.$

It follows from $\lambda(x)\cdot x=x\notin I$ that $\lambda(x)\notin I$.
So, we can assume that the sets $U_x$ and $W_{\lambda(x)}$ are
contained in $X\setminus I$.

We claim that the open sets $U_{z}=U_{x}\times U_{y}$ and
$W_{\lambda(z)}=W_{\lambda_X(x)}\times W_{\lambda_Y(y)}$ witness that
the left unit operation $\lambda$ on $X\times_I Y$ is dicontinuous
at the point $z$. Given any point $u\in X\times_I Y$ such that $u
\in (W_{\lambda(z)}\leftthreetimes U_z)\cap
\lambda^{-1}(W_{\lambda(z)})$, we need to show that $u \in O_z$.
It follows that $\lambda(u)\in W_{\lambda(z)}$ and $wu\in
U_z$ for some $w\in W_{\lambda(z)}$, which implies that $w,u\notin
I$ and hence $w=(x_w,y_w)$ and $u=(x_u,y_u)$ for some points
$x_w,x_u\in X\setminus I$ and $y_w,y_u\in Y$. Since
$wu=(x_wx_u,y_wy_u)\in U_z=U_x\times U_y$ and
$\lambda(u)=(\lambda_X(x_u),\lambda_Y(y_u))\in
W_{\lambda(z)}=W_{\lambda_X(x)}\times W_{\lambda_Y(y)}$, we obtain
$x_wx_u\in U_x, y_wy_u\in U_y$ and $\lambda_X(x_u)\in
W_{\lambda_X(x)}, \lambda_Y(y_u)\in W_{\lambda_Y(y)}$. Hence
$x_u\in (W_{\lambda_X(x)}\leftthreetimes U_x)\cap
\lambda_X^{-1}(W_{\lambda_X(x)})\subset O_x$ and $y_u\in
(W_{\lambda_Y(y)}\leftthreetimes U_y)\cap
\lambda_Y^{-1}(W_{\lambda_Y(y)})\subset O_y$ and thus $u \in O_x\times O_y=O_z$.

In case $z\in I$, we can assume that $O_{z}$ is of the form
$O_{z}=\pi^{-1}(O'_{z})$, where $O'_{z}$ is an open neighborhood
of the point $z\in I$ in the left unosemigroup $X$. Since $\lambda_X$ is
left dicontinuous at $z$, there are open neighborhoods $U'_{z}\subset X$ and
$W'_{\lambda_X(z)}\subset\lambda_X(X)$ of the points $z\in X$ and $\lambda_X(z)\in\lambda_X(X)$ such that
$(W'_{\lambda_X(z)}\leftthreetimes U'_{z})\cap
\lambda_X^{-1}(W'_{\lambda_X(z)})\subset O'_z$. The latter inclusion implies that for the open neighborhoods
$U_{z}=\pi^{-1}(U'_{z})\subset X\times_{I}Y$ of $z$ and
$W_{\lambda(z)}=\lambda(X\times_I Y)\cap \pi^{-1}(W'_{\lambda_X(z)})\subset \lambda(X\times_{I}Y)$ of
$\lambda(z)$ we get
 $$(W_{\lambda(x)}\leftthreetimes U_z)\cap\lambda^{-1}(W_{\lambda(z)})\subset
\pi^{-1}\big((W'_{\lambda_X(z)}\leftthreetimes U'_{z})\cap
\lambda_X^{-1}(W'_{\lambda_X(z)})\big)\subset  \pi^{-1}(O'_z)=O_z,$$
which witnesses that the left unit operation $\lambda$
on $X\times_I Y$ is left dicontinuous at $z$.
 Thus, the
reduced product $\mathbf X\times_I\mathbf
Y=(X\times_{I}Y,\lambda)$ is a ditopological left unosemigroup.
\end{proof}

\smallskip

By analogy we can introduce the reduced product of topological
right unosemigroups. Namely, if $\mathbf X=(X,\rho_X)$ and $\mathbf
Y=(Y,\rho_Y)$ are two topological right unosemigroups and $I\subset X$ is
a closed two-sided ideal with $\rho_X(I)\subset I$, then the
reduced product $X\times_I Y$ carries an induced right unit
operation $\rho:X\times_I Y\to X\times_I Y$ defined by
$$\rho(z)=\begin{cases}
\rho_X(z)&\mbox{if $z\in I$}\\
(\rho_X(x),\rho_Y(y))&\mbox{if $z=(x,y)\in (X\setminus I)\times Y$}.
\end{cases}
$$
The reduced product $X\times_I Y$ endowed with the right unit operation $\rho$ is a topological right
unosemigroup called the {\em reduced product} of the topological right unosemigroups $(X,\lambda_X)$ and $(Y,\lambda_Y)$.

By analogy with Theorem~\ref{RPL}, we can prove:
\begin{theorem}\label{RPR} If the topological right unosemigroups $\mathbf X=(X,\rho_X)$ and $\mathbf Y=(Y,\rho_Y)$ are ditopological,
then so is their reduced product $\mathbf X\times_I \mathbf Y=(X\times_I Y,\rho)$.
\end{theorem}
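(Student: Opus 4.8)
The plan is to mirror the proof of Theorem~\ref{RPL} verbatim, swapping left for right throughout: replace $\lambda$ by $\rho$, replace every set of the form $a\leftthreetimes b$ by $b\rightthreetimes a$, and replace the defining inclusion $(W_{\lambda(x)}\leftthreetimes U_x)\cap\lambda^{-1}(W_{\lambda(x)})\subset O_x$ by $(U_x\rightthreetimes W_{\rho(x)})\cap\rho^{-1}(W_{\rho(x)})\subset O_x$. The argument splits into the same two cases according to whether the fixed point $z\in X\times_I Y$ lies in $I$ or in $(X\setminus I)\times Y$.

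In the case $z=(x,y)\in (X\setminus I)\times Y$, I would again reduce to a basic neighbourhood $O_z=O_x\times O_y$ with $O_x\subset X\setminus I$, use right dicontinuity of $\rho_X$ and $\rho_Y$ to obtain neighbourhoods $U_x,W_{\rho_X(x)}$ and $U_y,W_{\rho_Y(y)}$ with $(U_x\rightthreetimes W_{\rho_X(x)})\cap\rho_X^{-1}(W_{\rho_X(x)})\subset O_x$ and the analogous inclusion over $Y$, shrink $U_x$ and $W_{\rho_X(x)}$ into $X\setminus I$ (legitimate since $x\cdot\rho_X(x)=x\notin I$ forces $\rho_X(x)\notin I$), and then check that $U_z=U_x\times U_y$ and $W_{\rho(z)}=W_{\rho_X(x)}\times W_{\rho_Y(y)}$ work. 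Concretely, any $u\in(U_z\rightthreetimes W_{\rho(z)})\cap\rho^{-1}(W_{\rho(z)})$ satisfies $uw\in U_z$ for some $w\in W_{\rho(z)}$ and $\rho(u)\in W_{\rho(z)}$; both $u,w$ avoid $I$, so writing $u=(x_u,y_u)$, $w=(x_w,y_w)$ and reading off coordinates yields $x_u\in(U_x\rightthreetimes W_{\rho_X(x)})\cap\rho_X^{-1}(W_{\rho_X(x)})\subset O_x$ and $y_u\in O_y$, hence $u\in O_z$.

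In the case $z\in I$, I would take $O_z=\pi^{-1}(O'_z)$ for an open $O'_z\ni z$ in $X$, apply right dicontinuity of $\rho_X$ at $z$ to get $U'_z$ and $W'_{\rho_X(z)}$ with $(U'_z\rightthreetimes W'_{\rho_X(z)})\cap\rho_X^{-1}(W'_{\rho_X(z)})\subset O'_z$, and then set $U_z=\pi^{-1}(U'_z)$ and $W_{\rho(z)}=\rho(X\times_I Y)\cap\pi^{-1}(W'_{\rho_X(z)})$. Since $\pi$ is a semigroup homomorphism commuting with the unit operations, the preimage $\pi^{-1}$ carries the division sets back compatibly, giving $(U_z\rightthreetimes W_{\rho(z)})\cap\rho^{-1}(W_{\rho(z)})\subset\pi^{-1}\big((U'_z\rightthreetimes W'_{\rho_X(z)})\cap\rho_X^{-1}(W'_{\rho_X(z)})\big)\subset\pi^{-1}(O'_z)=O_z$.

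The only point requiring a little care — and hence the main obstacle such as it is — is the bookkeeping of which division operation appears on which side after dualizing: one must consistently write $uw$ rather than $wu$ and $U_z\rightthreetimes W_{\rho(z)}$ rather than $W\leftthreetimes U$, and verify that the coordinatewise decomposition $(x_w x_u, y_w y_u)$ is replaced by $(x_u x_w, y_u y_w)$ so that the factor lying in $U_x,U_y$ is indeed the relevant product. Since none of the topological inputs changes and the reduced-product construction is symmetric in the needed respects, no genuinely new idea is required beyond this transcription; accordingly I would simply remark that the proof is obtained from that of Theorem~\ref{RPL} by the obvious left-right symmetry.

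\begin{proof} This theorem is proved by analogy with Theorem~\ref{RPL}, replacing the left unit operations $\lambda_X,\lambda_Y,\lambda$ by the right unit operations $\rho_X,\rho_Y,\rho$, and each left division set $W\leftthreetimes U$ by the corresponding right division set $U\rightthreetimes W$ (so that, in the first case of the proof, a point $u=(x_u,y_u)$ in $(U_z\rightthreetimes W_{\rho(z)})\cap\rho^{-1}(W_{\rho(z)})$ satisfies $uw\in U_z$ for some $w=(x_w,y_w)\in W_{\rho(z)}$, and one reads off $x_ux_w\in U_x$, $y_uy_w\in U_y$, $\rho_X(x_u)\in W_{\rho_X(x)}$, $\rho_Y(y_u)\in W_{\rho_Y(y)}$, whence $x_u\in O_x$ and $y_u\in O_y$; in the second case one uses that $\pi$ is a continuous homomorphism commuting with the unit operations). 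We leave the routine details to the reader.
\end{proof}
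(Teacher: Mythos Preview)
Your proposal is correct and matches the paper's approach exactly: the paper gives no independent proof of Theorem~\ref{RPR} but simply states that it ``is proved by analogy with Theorem~\ref{RPL}''. Your write-up is in fact more detailed than the paper's, and the bookkeeping you flag (swapping $wu$ for $uw$, $W\leftthreetimes U$ for $U\rightthreetimes W$, and using $x\cdot\rho_X(x)=x\notin I\Rightarrow\rho_X(x)\notin I$) is precisely the content of the left--right dualization.
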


The above discussion implies that for topological unosemigroups $\mathbf X=(X,\lambda_X,\rho_X)$ and $\mathbf Y=(Y,\lambda_Y,\rho_Y)$
and a closed two-sided ideal $I\subset X$ with $\lambda_X(I)\cup\rho_X(I)\subset I$,
the triple $\mathbf X\times_I\mathbf Y=(X\times_I Y,\lambda,\rho)$ is a topological unosemigroup. This topological unosemigroups  will be called
{\em reduced product} of the topological unosemigroups $\mathbf X$ and $\mathbf Y$. Theorems~\ref{RPL} and \ref{RPR} imply:

\begin{corollary}\label{RP} If topological unosemigroups $\mathbf X$ and $\mathbf Y$ are ditopological,
then so is their reduced product $\mathbf X\times_I \mathbf Y$.
\end{corollary}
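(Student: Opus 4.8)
The plan is to deduce Corollary~\ref{RP} directly from Theorems~\ref{RPL} and \ref{RPR} with essentially no new work, since the ditopological unosemigroup structure on $\mathbf X\times_I\mathbf Y$ decouples into its left and right halves. First I would observe that, by the very definition preceding the corollary, the topological unosemigroup $\mathbf X\times_I\mathbf Y=(X\times_I Y,\lambda,\rho)$ carries simultaneously the structure of the reduced product of the topological left unosemigroups $(X,\lambda_X)$ and $(Y,\lambda_Y)$ (this uses $\lambda_X(I)\subset I$) and the structure of the reduced product of the topological right unosemigroups $(X,\rho_X)$ and $(Y,\rho_Y)$ (this uses $\rho_X(I)\subset I$); both hypotheses are guaranteed by $\lambda_X(I)\cup\rho_X(I)\subset I$.

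Next I would invoke the assumption that $\mathbf X$ and $\mathbf Y$ are ditopological unosemigroups. By definition this means that $(X,\lambda_X)$ and $(Y,\lambda_Y)$ are ditopological left unosemigroups and $(X,\rho_X)$ and $(Y,\rho_Y)$ are ditopological right unosemigroups. Applying Theorem~\ref{RPL} to the first pair yields that the left unit operation $\lambda$ on $X\times_I Y$ is dicontinuous, i.e.\ $(X\times_I Y,\lambda)$ is a ditopological left unosemigroup; applying Theorem~\ref{RPR} to the second pair yields that the right unit operation $\rho$ on $X\times_I Y$ is dicontinuous, i.e.\ $(X\times_I Y,\rho)$ is a ditopological right unosemigroup. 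Since a ditopological unosemigroup is by definition a topological semigroup equipped with a dicontinuous left unit operation and a dicontinuous right unit operation, these two conclusions together say precisely that $\mathbf X\times_I\mathbf Y=(X\times_I Y,\lambda,\rho)$ is a ditopological unosemigroup.

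The only point that needs a word of care—and the closest thing to an obstacle—is the compatibility of the two reduced-product structures: one must be sure that the semigroup operation, the topology, and the underlying set of $X\times_I Y$ produced in the ``left'' construction coincide with those produced in the ``right'' construction. But this is immediate, because in both Theorem~\ref{RPL} and Theorem~\ref{RPR} the underlying topological semigroup $X\times_I Y$ is the same object, built from the topological semigroups $X$, $Y$ and the closed two-sided ideal $I$ alone, with the unit operation merely an extra piece of structure layered on top; the construction of $\lambda$ and of $\rho$ are formally identical substitutions into this common object. Hence no genuine interaction between $\lambda$ and $\rho$ arises, and the corollary follows.

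\begin{proof} By definition, the topological unosemigroup $\mathbf X=(X,\lambda_X,\rho_X)$ is ditopological if and only if $(X,\lambda_X)$ is a ditopological left unosemigroup and $(X,\rho_X)$ is a ditopological right unosemigroup; the same applies to $\mathbf Y$. Since $\lambda_X(I)\subset I$, Theorem~\ref{RPL} implies that the reduced product $(X\times_I Y,\lambda)$ of the ditopological left unosemigroups $(X,\lambda_X)$ and $(Y,\lambda_Y)$ is a ditopological left unosemigroup. Since $\rho_X(I)\subset I$, Theorem~\ref{RPR} implies that the reduced product $(X\times_I Y,\rho)$ of the ditopological right unosemigroups $(X,\rho_X)$ and $(Y,\rho_Y)$ is a ditopological right unosemigroup. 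As the underlying topological semigroup $X\times_I Y$ and the operations $\lambda$ and $\rho$ on it are exactly those of the topological unosemigroup $\mathbf X\times_I\mathbf Y$, we conclude that $\mathbf X\times_I\mathbf Y=(X\times_I Y,\lambda,\rho)$ is a topological semigroup endowed with a dicontinuous left unit operation $\lambda$ and a dicontinuous right unit operation $\rho$, that is, a ditopological unosemigroup.
\end{proof}
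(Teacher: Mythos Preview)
Your proposal is correct and follows exactly the approach of the paper: the corollary is stated there as an immediate consequence of Theorems~\ref{RPL} and~\ref{RPR}, with no separate proof given. Your write-up simply spells out explicitly how the two-sided notion of ditopological unosemigroup decomposes into its left and right halves so that each theorem applies, which is precisely the intended (one-line) argument.
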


Now we present some important examples of reduced products.

\begin{example} Let $G$ be a topological group and let
$\mathbf 2=(\{0,1\},\min)$ be a two-element semilattice endowed
with the discrete topology. By Theorem~\ref{t4.2}, the semigroups
$G$ and $\mathbf 2$ endowed with the canonical left and right unit
operations are ditopological inverse semigroups and by
Corollary~\ref{RP}, so is their reduced product $\dot G=\mathbf
2\times_{\{0\}}G$ called the {\em $0$-extension} of $G$.
\end{example}

\begin{example} Let $G$ be a topological group and let $\II$ be the unit interval $[0,1]$ endowed with the semilattice operation of minimum.
By Theorem~\ref{t4.2}, the semigroups $G$ and $\II$ endowed with
the canonical left and right unit operations are ditopological
inverse semigroups and by Corollary~\ref{RP}, so is their reduced
product $\hat G=\II \times_{\{0\}}G$ called {\em the cone over
$G$}.
\end{example}

The $0$-extensions and cones of topological groups will be
essentially used in the paper \cite{BP} devoted to constructing
embeddings of Clifford ditopological inverse semigroups into
Tychonoff products of topological semilattices and cones over
topological groups.

\subsection{Semidirect products of topological unosemigroups}
In this subsection we shall consider the operation of a semidirect product of topological (left, right) unosemigroups.
Let us mention that semidirect products of semigroups were studied in Chapter 2 of \cite{CHK2}.

By a {\em continuous action} of a topological semigroup $F$
on a topological semigroup $S$ we understand a continuous
function $\alpha:F\times S\to S$ having the following two properties:
\begin{itemize}
\item for each $f\in F$ the function $\alpha_f:S\to S$,
$\alpha_f:x\mapsto\alpha(f,x)$, is a semigroup homomorphism
of $S$; \item $\alpha_{fg}=\alpha_f\circ\alpha_g$ for each $f,g\in
F$.
\end{itemize}

The action $\alpha:F\times S\to S$ induces a continuous associative binary operation
$$(s,f)\cdot(t,g)=(s\cdot \alpha_f(t),f\cdot g)$$on the product $S\times F$.
The product $S\times F$ endowed with this binary operation is
denoted by $S\times_\alpha F$ and called the {\em semidirect product} of the topological semigroups
$S$ and $F$.

We shall say that the action $\alpha:F\times S\to S$
\begin{itemize}
\item respects a (left, right) unit operation $u:F\to F$ on $F$ if
$\alpha(u(f),s)=s$ for all $(f,s)\in F\times S$;
\item respects a (left, right) unit operation $u:S\to S$ on $S$ if
$\alpha(f,u(s))=u(s)$ for all $(f,s)\in F\times S$.
\end{itemize}

If $(F,\lambda_F)$ and $(S,\lambda_S)$ are topological left unosemigroups
and a continuous action $\alpha:F\times S\to S$ of $F$ on $S$
respects the left unit operation $\lambda_F$, then the unary
operation $$\lambda:S\times_\alpha F\to
S\times_\alpha F,\;\;\;
\lambda:(s,f)\mapsto(\lambda_S(s),\lambda_F(f)),$$
is a continuous left unit operation on the semidirect product
$S\times_\alpha F$ as
$$(\lambda_S(s),\lambda_F(f))\cdot(s,f)=(\lambda_S(s)\cdot\alpha(\lambda_F(f),s),\lambda_F(f)\cdot f)=
(\lambda_S(s)\cdot s,f)=(s,f)$$for all $(s,f)\in S\times F$.

Therefore, $\mathbf S\times_\alpha\mathbf F=(S\times_\alpha F,\lambda)$ is a topological left unosemigroup,
called the {\em semidirect product} of the topological left unosemigroups $\mathbf S=(S,\lambda_S)$ and $\mathbf F=(F,\lambda_F)$.

\begin{theorem}\label{SDL} Let $\mathbf S=(S,\lambda_S)$ and $\mathbf F=(F,\lambda_F)$ be topological left unosemigroups and $\alpha:F\times S\to F$ be a continuous action of $F$ on $S$, which respects the left unit operation $\lambda_F$ of the left unosemigroup $F$. The topological left unosemigroup $\mathbf S\times_\alpha \mathbf F$ is ditopological if and only if the topological left unosemigroups $\mathbf S$ and $\mathbf F$ are ditopological.
\end{theorem}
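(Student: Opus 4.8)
The plan is to work directly from the definition of dicontinuity, exploiting one structural observation. Since the action $\alpha$ respects $\lambda_F$, we have $\alpha_{\lambda_F(f)}=\id_S$ for every $f\in F$, hence $\alpha_b=\id_S$ for \emph{every} $b\in\lambda_F(F)$ (write $b=\lambda_F(f')$). Consequently, whenever one left-multiplies a pair $(t,g)\in S\times F$ by a pair $(a,b)$ whose second coordinate $b$ lies in $\lambda_F(F)$ — and in the definition of dicontinuity the divisor always lies in the image of the unit operation, which here is $\lambda(S\times_\alpha F)=\lambda_S(S)\times\lambda_F(F)$ — the product is simply $(a\cdot t,\,bg)$, with no interference from $\alpha$ in the first coordinate. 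This ``decoupling'' makes the two coordinates behave like an honest direct product for the purposes of left division by elements of the $\lambda$-image. I expect this observation, rather than any computation, to be the crux: without ``$\alpha$ respects $\lambda_F$'' the divisions entangle $\alpha$ and the argument collapses.

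For the implication ($\Leftarrow$): assume $\mathbf S$ and $\mathbf F$ are ditopological, fix $(s,f)\in S\times F$ and, shrinking, a basic neighborhood $O_s\times O_f$. Apply dicontinuity of $\lambda_S$ at $s$ and of $\lambda_F$ at $f$ to obtain $U_s$, $W_S\subset\lambda_S(S)$ and $U_f$, $W_F\subset\lambda_F(F)$ with $(W_S\leftthreetimes U_s)\cap\lambda_S^{-1}(W_S)\subset O_s$ and $(W_F\leftthreetimes U_f)\cap\lambda_F^{-1}(W_F)\subset O_f$. I claim $U_s\times U_f$ and $W_S\times W_F$ (which lies in $\lambda_S(S)\times\lambda_F(F)$) witness dicontinuity of $\lambda$ at $(s,f)$: if $(t,g)$ lies in $\bigl(W_S\times W_F\leftthreetimes U_s\times U_f\bigr)\cap\lambda^{-1}(W_S\times W_F)$ with witnessing divisor $(a,b)\in W_S\times W_F$, then since $b\in\lambda_F(F)$ the product $(a,b)\cdot(t,g)$ equals $(a\cdot t,\,bg)$, so $at\in U_s$, $bg\in U_f$; together with $\lambda_S(t)\in W_S$, $\lambda_F(g)\in W_F$ this forces $t\in O_s$ and $g\in O_f$, i.e. $(t,g)\in O_s\times O_f$.

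For the implication ($\Rightarrow$): assume $\mathbf S\times_\alpha\mathbf F$ is ditopological. To see $\mathbf F$ is ditopological, fix $f\in F$, a neighborhood $O_f$, any $s\in S$, and apply dicontinuity of $\lambda$ at $(s,f)$ to the neighborhood $S\times O_f$; shrink the resulting neighborhoods to products $U_s\times U_f$ and $W_S\times W_F$ with $W_S\subset\lambda_S(S)$, $W_F\subset\lambda_F(F)$. Then $U_f$ and $W_F$ witness dicontinuity of $\lambda_F$ at $f$: given $g\in(W_F\leftthreetimes U_f)\cap\lambda_F^{-1}(W_F)$ with divisor $b\in W_F$ (so $bg\in U_f$), test the point $(s,g)$ using the divisor $(\lambda_S(s),b)\in W_S\times W_F$; since $b\in\lambda_F(F)$ the product is $(\lambda_S(s)\cdot s,\,bg)=(s,bg)\in U_s\times U_f$, and $\lambda(s,g)=(\lambda_S(s),\lambda_F(g))\in W_S\times W_F$, so $(s,g)$ falls in the relevant set and hence in $S\times O_f$, giving $g\in O_f$. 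The proof that $\mathbf S$ is ditopological is symmetric: fix $s\in S$, $O_s$, any $f_0\in F$, apply dicontinuity of $\lambda$ at $(s,f_0)$ to $O_s\times F$, and for $t\in(W_S\leftthreetimes U_s)\cap\lambda_S^{-1}(W_S)$ with divisor $a\in W_S$, test $(t,f_0)$ with divisor $(a,\lambda_F(f_0))$, using $\alpha_{\lambda_F(f_0)}=\id_S$ and $\lambda_F(f_0)\cdot f_0=f_0\in U_f$ to land in $O_s\times F$.

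Apart from bookkeeping, the only points needing care are: reducing to \emph{basic} (product) neighborhoods in $S\times F$ and in $\lambda_S(S)\times\lambda_F(F)$, which is legitimate because shrinking $U$ and $W$ only strengthens the defining inclusion; and, in the ($\Rightarrow$) direction, choosing the auxiliary coordinate ($\lambda_S(s)$ on the $S$-side, $\lambda_F(f_0)$ on the $F$-side) so that the test point genuinely lies in the set $\bigl(W\leftthreetimes U\bigr)\cap\lambda^{-1}(W)$ — this is exactly where the unit identities $\lambda_S(s)\cdot s=s$ and $\lambda_F(f_0)\cdot f_0=f_0$ get used.
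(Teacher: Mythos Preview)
Your proof is correct and follows essentially the same approach as the paper's: the same ``decoupling'' observation that $\alpha_b=\id_S$ for $b\in\lambda_F(F)$, the same product witnesses $U_s\times U_f$ and $W_S\times W_F$ in both directions, and the same choice of auxiliary divisors $(\lambda_S(s),b)$ and $(a,\lambda_F(f_0))$ in the ``only if'' part. The only difference is presentational --- you state the decoupling up front as the organizing idea, whereas the paper invokes it inline at each step.
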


\begin{proof} To prove the ``only if'' part, assume that $\mathbf S\times_\alpha \mathbf F$ is a
ditopological left unosemigroup. We need to prove that the topological left unosemigroups $\mathbf S$ and $\mathbf F$ are ditopological.

To prove the dicontinuity of the left unit operation $\lambda_S$, fix any point $s\in S$ and a neighborhood $O_s$ of $s$ in $S$. Fix any point $f\in
F$ and consider the neighborhood $O_{(s,f)}=O_s\times F$ of $(s,f)$ in the topological semigroup $S\times_\alpha F$.
The dicontinuity of the left unit operation
$\lambda$ on $S\times_\alpha F$ yields
neighborhoods $U_{(s,f)}\subset S\times_\alpha F$ and $W_{\lambda(s,f)}\subset \lambda(S\times_\alpha F)=\lambda_S(S)\times\lambda_F(F)$ of $(s,f)$ and
$\lambda(s,f)=(\lambda_S(s),\lambda_F(f))$ such that
$(W_{\lambda(s,f)}\leftthreetimes U_{(s,f)})\cap
\lambda^{-1}(W_{\lambda(s,f)})\subset O_{(s,f)}$. Clearly, we can
assume that these neighborhoods are of the form
$U_{(s,f)}=U_s\times U_f$ and
$W_{\lambda(s,f)}=W_{\lambda_S(s)}\times W_{\lambda_F(f)}$ for
some open sets $U_s\subset S$, $W_{\lambda_S(s)}\subset \lambda_S(S)$,
$U_f\subset F$, and $W_{\lambda_F(f)}\subset \lambda_F(F)$.

We claim that the neighborhoods $U_s$ and $W_{\lambda_S(s)}$
witness that  $\lambda_S$ is dicontinuous at $s$. Let $t\in
(W_{\lambda_S(s)}\leftthreetimes U_s)\cap
\lambda^{-1}_S(W_{\lambda_S(s)})$. This implies $\lambda_S(t)\in
W_{\lambda_S(s)}$ and $wt\in U_s$ for some $w\in
W_{\lambda_S(s)}$. Then for the elements $(t,f)\in S\times_\alpha F$
and $(w,\lambda_F(f))\in W_{\lambda(s,f)}$ we get
$$(w,\lambda_F(f))\cdot (t,f)=(w\cdot \alpha_{\lambda_F(f)}(t),\lambda_F(f)\cdot f)=(wt,f)\in U_s\times
U_f=U_{(s,f)}$$ and $$\lambda(s,f)=(\lambda_S(s),\lambda_F(f))\in
W_{\lambda_S(s)}\times W_{\lambda_F(f)}=W_{\lambda(s,f)}.$$ Here we used the fact that the action $\alpha$ respects
the left unit operation $\lambda_F$. The choice of the neighborhoods $U_{(s,f)}$ and $W_{\lambda(s,f)}$ guarantees
that $(t,f)\in O_{(s,f)}$ and hence $t \in O_s$.
\smallskip

To check the dicontinuity of the left unit operation
$\lambda_F:F\rightarrow F$, take any point $f \in F$ and a
neighborhood $O_f\subset F$ of $f$ in $F$. Fix any element $s\in S$ and
consider the neighborhood $O_{(s,f)}=S\times O_f$ of $(s,f)$ in $S\times_\alpha F$.
The dicontinuity of the left unit operation $\lambda$ on $\mathbf S\times_\alpha
\mathbf F$ yields neighborhoods
$U_{(s,f)}\subset S\times_\alpha F$ and $W_{\lambda(s,f)}\subset\lambda(S\times_\alpha F)$ of $(s,f)$ and
$\lambda(s,f)=(\lambda_S(s),\lambda_F(f))$ such that $(W_{\lambda(s,f)}\leftthreetimes U_{(s,f)}) \cap
\lambda^{-1}(W_{\lambda(s,f)})\subset O_{(s,f)}$. We lose no
generality assuming that $U_{(s,f)}=U_s\times U_f$ and
$W_{\lambda(s,f)}=W_{\lambda_S(s)}\times W_{\lambda_F(f)}$ for
some open sets $U_s\subset S$, $W_{\lambda_S(s)}\subset \lambda_S(S)$,
$U_f\subset F$, and $W_{\lambda_F(f)}\subset \lambda_F(F)$.

We claim that the neighborhoods $U_f$ and $W_{\lambda_F(f)}$
witness the dicontinuity of $\lambda_F$ at the point $f$. Given
any point $g\in (W_{\lambda_F(f)}\leftthreetimes U_f)\cap
\lambda^{-1}_F(W_{\lambda_F(f)})$, observe that $\lambda_F(g)\in
W_{\lambda_F(f)}$ and $wg\in U_f$ for some $w\in
W_{\lambda_F(f)}\subset\lambda_F(F)$. Taking into account that the
action $\alpha$ respects the left unit operation $\lambda_F$ and
$w\in\lambda_F(F)$, we conclude that $\alpha_w(s)=s$. Then for the
elements $(s,g)$ and $(\lambda_S(s),w)\in W_{\lambda_S(s)}\times
W_{\lambda_F(f)}=W_{\lambda(s,f)}$ we get
$$(\lambda_S(s),w)\cdot (s,g)=(\lambda_S(s)\cdot \alpha_w(s),wg)=(\lambda_S(s)\cdot s,wg)=(s,g)\in U_{(s,f)}$$
and $\lambda(s,g)=(\lambda_S(s),\lambda_F(g))\in W_{\lambda_S(s)}\times W_{\lambda_F(f)}=W_{\lambda(s,f)}$, which means that
$$(s,g)\in (W_{\lambda(s,f)}\leftthreetimes U_{(s,f)})\cap\lambda^{-1}(W_{\lambda(s,f)})\subset O_{(s,f)}=S\times O_f$$
and hence $g\in O_f$. This completes the proof of the ``only if'' part of the theorem.
\smallskip

To prove the ``if'' part, assume that the topological left unosemigroups $\mathbf S$ and $\mathbf F$ are ditopological.
We need to check that the left unit operation $\lambda:(s,f)\mapsto(\lambda_S(s),\lambda_F(f))$ on $S\times_\alpha F$
is dicontinuous at every point $(s,f)\in S\times_\alpha F$. Fix any open neighborhood
$O_{(s,f)}$  of $(s,f)$ in $S\times_\alpha F$. We lose no generality
assuming that it is of basic form:
$O_{(s,f)}=O_s\times O_f$ where $O_s$ and $O_f$ are open
neighborhoods of $s$ and $f$ in $S$ and $F$, respectively.

By the
dicontinuity of the left unit operation $\lambda_F$ at $f$, there are
neighborhoods $U_f\subset F$ and $W_{\lambda_F(f)}\subset\lambda_F(F)$ of $f$ and
$\lambda_F(f)$ such that $(W_{\lambda_F(f)}\leftthreetimes
U_f )\cap \lambda_F^{-1}(W_{\lambda_F(f)})\subset O_f$.
By the
dicontinuity of the left unit operation $\lambda_S$ at $s$, there are
neighborhoods $U_s\subset S$ and $W_{\lambda_S(s)}\subset\lambda_S(S)$ of $s$ and
$\lambda_S(s)$ such that $(W_{\lambda_S(s)}\leftthreetimes
U_s)\cap \lambda_S^{-1}(W_{\lambda_S(s)})\subset O_s$.

We claim that the neighborhoods $U_{(s,f)}=U_s\times U_f$ and
$W_{(s,f)}=W_{\lambda_S(s)}\times W_{\lambda_F(f)}$ of $(s,f)$ and $\lambda(s,f)$ witness that
the left unit operation $\lambda$ is dicontinuous at $(s,f)$.
Given any pair $(t,g)\in (W_{\lambda(s,f)}\leftthreetimes U_{(s,f)})\cap\lambda^{-1}(W_{\lambda(s,f)})$,
we need to show that $(t,g)\in O_{(s,f)}$. It follows that $(w,h)\cdot (t,g)\in U_{(s,g)}$ for some pair $(w,h)\in W_{\lambda(s,f)}$.
Taking into account that the action $\alpha$ respects the left unit operation $\lambda_F$ and $h\in W_{\lambda_F(f)}\subset\lambda_F(F)$,
we conclude that
$(w,h)\cdot(t,g)=(w\cdot\alpha_h(t),hg)=(w t,hg)$ and hence $(wt,hg)\in U_{(s,f)}=U_s\times U_f$ and
$$(t,g)\in \big((W_{\lambda_S(s)}\leftthreetimes U_s)\cap\lambda_S^{-1}(W_{\lambda_S(s)})\big)\times
\big((W_{\lambda_F(f)}\leftthreetimes U_f)\cap\lambda_F^{-1}(W_{\lambda_F(f)})\big)\subset O_s\times O_f=O_{(s,f)}.$$
\end{proof}

Now, given two topological right unosemigroups $\mathbf S=(S,\rho_S)$ and $\mathbf F=(F,\rho_F)$ and
a continuous action $\alpha:F\times S\to S$ of $F$ on $S$, we shall define a right unit operation on the semidirect product $S\times_\alpha F$.
This can be done under an additional assumption that the action $\alpha$ is {\em $\rho_S$-invertible}
in the sense that for every $f\in F$ the restriction $\bar\alpha_f=\alpha_f|\rho_S(S)$ is a bijective map of $\rho_S(S)$ and the map
$$\alpha^{-}:F\times \rho_S(S)\to\rho_S(S),\;\;\alpha^-:(f,s)\mapsto \bar\alpha^{-1}_f(s),$$
is continuous.

In this case the map $\rho:S\times_\alpha F\to S\times_\alpha F$ defined by $$\rho(s,f)=(\bar\alpha^{-1}_f(\rho_S(s)),\rho_F(f))=(\alpha^-(f,\rho_S(s)),\rho_F(f))$$ is continuous.

Since
$$
\begin{aligned}
(s,f)\cdot \rho(s,f)&=(s,f)\cdot(\bar \alpha^{-1}_f(\rho_S(s)),\rho_F(f))=(s\cdot\alpha_f\circ\bar\alpha^{-1}_f(\rho_S(s)),f\cdot \rho_F(f))=\\
&=(s\cdot\bar\alpha_f\circ\bar\alpha^{-1}_f(\rho_S(s)),f)=(s\cdot\rho_S(s),f)=(s,f),
\end{aligned}
$$
the map $\rho$ is a continuous right unit operation on $S\times_\alpha F$. Therefore, $\mathbf S\times_\alpha \mathbf F=(S\times_\alpha F,\rho)$
is a topological right unosemigroup, called the {\em semidirect product} of the topological right unosemigroups $\mathbf S=(S,\rho_S)$ and $\mathbf F=(F,\rho_F)$.

Let us observe that if the action $\alpha$ respects the right unit operation $\rho_S$, then for every $f\in F$
the restriction $\bar \alpha_f=\alpha_f|\rho_S(S)$ is an identity map of $\rho_S(S)$ and hence the action $\alpha$ is $\rho_S$-invertible.
Moreover, in this case $\rho(s,f)=(\rho_S(s),\rho_F(f))$ for all $(s,f)\in S\times F$.

The following propositions will help us to detect $\rho_S$-invertible actions.

\begin{proposition} A continuous action $\alpha:F\times S\to S$ of a topological right unosemigroup $(F,\rho_F)$ on a
topological right unosemigroup $(S,\rho_S)$ is $\rho_S$-invertible if
\begin{enumerate}
\item $\alpha$ respects the right unit operation $\rho_F$;
\item $\alpha_f(\rho_S(S))=\rho_S(S)$ for all $f\in F$;
\item there is a continuous unary operation $(\,)^{-1}:F\to F$ such that $\rho_F(f)=f^{-1}f$ for all $f\in F$.
\end{enumerate}
\end{proposition}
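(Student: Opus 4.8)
The plan is to show that the hypotheses (1)--(3) let us build the continuous map $\alpha^-\colon F\times\rho_S(S)\to\rho_S(S)$ required by the definition of a $\rho_S$-invertible action, and to verify that for each $f\in F$ the map $\bar\alpha_f=\alpha_f|\rho_S(S)$ is a bijection of $\rho_S(S)$ with $\bar\alpha_f^{-1}=\alpha^-(f,\cdot)$. First I would fix $f\in F$ and check that $\bar\alpha_f$ maps $\rho_S(S)$ into itself: this is exactly hypothesis (2), which moreover says $\bar\alpha_f$ is onto $\rho_S(S)$. For injectivity the key observation is that, by (3), $\rho_F(f)=f^{-1}f$, and since $f^{-1}f$ is an idempotent one expects $\alpha_{f^{-1}f}$ to act as the identity on $\rho_S(S)$; combined with the functoriality relation $\alpha_{f^{-1}}\circ\alpha_f=\alpha_{f^{-1}f}$ this should give a left inverse for $\bar\alpha_f$, hence injectivity. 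The natural candidate for the two-sided inverse is then $\bar\alpha_f^{-1}=\alpha_{f^{-1}}|\rho_S(S)$, so that $\alpha^-(f,s)=\alpha(f^{-1},s)=\alpha_{f^{-1}}(s)$.

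The main point to pin down carefully is why $\alpha_{f^{-1}f}$ restricts to the identity on $\rho_S(S)$. Since the action respects $\rho_F$ (hypothesis (1)), we have $\alpha(\rho_F(g),x)=x$ for all $g\in F$ and $x\in S$; applying this with $g=f$ and using $\rho_F(f)=f^{-1}f$ from (3) gives $\alpha_{f^{-1}f}(x)=x$ for \emph{all} $x\in S$, in particular on $\rho_S(S)$. Then $\alpha_{f^{-1}}\circ\alpha_f=\alpha_{f^{-1}f}=\mathrm{id}_S$, so $\alpha_{f^{-1}}$ is a left inverse of $\alpha_f$ on all of $S$; restricting to $\rho_S(S)$ and invoking (2) (which gives $\alpha_f(\rho_S(S))=\rho_S(S)$ and $\alpha_{f^{-1}}(\rho_S(S))=\rho_S(S)$) we conclude that $\bar\alpha_f$ and $\bar\alpha_{f^{-1}}$ are mutually inverse bijections of $\rho_S(S)$. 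Hence $\bar\alpha_f$ is bijective and $\bar\alpha_f^{-1}=\bar\alpha_{f^{-1}}$.

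It remains to check continuity of $\alpha^-$. With the identification $\alpha^-(f,s)=\alpha(f^{-1},s)$ obtained above, continuity is immediate: the inversion $(\,)^{-1}\colon F\to F$ is continuous by hypothesis (3), the action $\alpha\colon F\times S\to S$ is continuous by assumption, and $\rho_S(S)$ carries the subspace topology, so $\alpha^-$ is the composition $F\times\rho_S(S)\xrightarrow{(\,)^{-1}\times\mathrm{id}} F\times\rho_S(S)\hookrightarrow F\times S\xrightarrow{\alpha} S$, whose image lies in $\rho_S(S)$ by (2). I expect the only subtlety — and the step worth stating explicitly rather than leaving to the reader — is the derivation $\alpha_{f^{-1}f}=\mathrm{id}_S$ from hypotheses (1) and (3); once that identity is in hand, bijectivity and continuity both fall out with no real calculation, and the definition of $\rho_S$-invertibility is satisfied.
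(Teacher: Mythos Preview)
Your proposal is correct and follows essentially the same route as the paper's proof: both derive $\alpha_{f^{-1}f}=\mathrm{id}_S$ from hypotheses~(1) and~(3), use this to get injectivity of $\alpha_f$ and combine with~(2) for bijectivity of $\bar\alpha_f$, then identify $\alpha^-(f,s)$ with $\alpha(f^{-1},s)$ to obtain continuity as a composition of continuous maps. The only cosmetic difference is that the paper argues bijectivity directly from injectivity of $\alpha_f$ on all of $S$ plus the surjectivity in~(2), whereas you phrase it via the pair $\bar\alpha_f$, $\bar\alpha_{f^{-1}}$ being mutual inverses; both amount to the same thing.
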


\begin{proof} Taking into account that the action $\alpha$ preserves the right unit operation $\rho_F:F\to F$, $\rho_F:f\mapsto f^{-1}f$,
we conclude that for every $f\in F$ and $s\in S$ we get $$s=\alpha(\rho_F(f),s)=\alpha(f^{-1}f,s)=\alpha_{f^{-1}f}(s)=\alpha_{f^{-1}}\circ\alpha_f(s),$$
which implies that the homomorphism $\alpha_f:S\to S$ is injective and hence has the inverse $\alpha_f^{-1}:\alpha_f(S)\to S$.
It follows from $\alpha_f(\rho_S(S))=\rho_S(S)$ that the restriction $\bar\alpha_f=\alpha_f|\rho_S(S)$ is a bijective map of $\rho_S(S)$.

It remains to check that the map $\alpha^{-}:F\times\rho_S(S)\to \rho_S(S)$, $\alpha^{-}:(f,s)\mapsto \bar \alpha^{-1}_f(s)$, is continuous.
For this observe that the function $\alpha^{-}$ coincides with the  continuous function
$\beta:F\times \rho_S(S)\to \rho_S(S)$, $\beta(f,s)\mapsto\alpha(f^{-1},s)$. Indeed, given any $f\in F$ and $s\in\rho_S(S)$,
we can find a unique point $x\in \rho_S(S)$ with $s=\bar\alpha_f(x)=\alpha_f(x)$ and conclude that
$$\beta(f,s)=\alpha(f^{-1},s)=\alpha_{f^{-1}}(s)=\alpha_{f^{-1}}\circ\alpha_f(x)=\alpha_{f^{-1}f}(x)=
\alpha_{\rho_F(f)}(x)=x=\bar\alpha_f^{-1}(s)=\alpha^{-}(f,s).$$
\end{proof}

Now we study the semidirect products of ditopological right unosemigroups.

\begin{theorem}\label{SDR} Let $\mathbf S=(S,\rho_S)$ and $\mathbf F=(F,\rho_F)$ be topological right unosemigroups
and  $\alpha:F\times S\to S$ be a $\rho_S$-invertible continuous action of $F$ on $S$. The semidirect product $\mathbf S\times_\alpha\mathbf F$ is
a ditopological right unosemigroup if and only if  the topological right unosemigroups  $\mathbf S$ and $\mathbf F$ are ditopological.
\end{theorem}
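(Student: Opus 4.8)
I would follow the two-part scheme of Theorem~\ref{SDL}, the new feature being that here the induced operation is $\rho(s,f)=(\bar\alpha_f^{-1}(\rho_S(s)),\rho_F(f))$ rather than a coordinatewise pair; write $r:=\bar\alpha_f^{-1}(\rho_S(s))$, so $\rho(s,f)=(r,\rho_F(f))$, $\alpha(f,r)=\bar\alpha_f(r)=\rho_S(s)$, and note $\rho(S\times_\alpha F)=\rho_S(S)\times\rho_F(F)$ since each $\bar\alpha_f$ permutes $\rho_S(S)$; I will use freely that every $\bar\alpha_g$ is a self-homeomorphism of $\rho_S(S)$ with continuous inverse $\bar\alpha_g^{-1}=\alpha^-(g,\cdot)$. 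For the ``if'' part, assume $\mathbf S,\mathbf F$ ditopological, fix $(s,f)$ and a basic neighborhood $O_s\times O_f$, and first invoke the dicontinuity of $\rho_S$ at $s$ to obtain $U_s\subset S$, $W_{\rho_S(s)}\subset\rho_S(S)$ with $(U_s\rightthreetimes W_{\rho_S(s)})\cap\rho_S^{-1}(W_{\rho_S(s)})\subset O_s$. Since $\alpha(f,r)=\rho_S(s)\in W_{\rho_S(s)}$, joint continuity of $\alpha$ at $(f,r)$ gives neighborhoods $N_f\ni f$ in $F$ and $W_1\ni r$ in $\rho_S(S)$ with $\alpha(N_f\times W_1)\subset W_{\rho_S(s)}$; then the dicontinuity of $\rho_F$ at $f$ applied to $O_f\cap N_f$ yields $U_f,W_{\rho_F(f)}$ with $(U_f\rightthreetimes W_{\rho_F(f)})\cap\rho_F^{-1}(W_{\rho_F(f)})\subset O_f\cap N_f$. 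I claim $U_s\times U_f$ and $W_1\times W_{\rho_F(f)}$ witness dicontinuity of $\rho$ at $(s,f)$: for $(t,g)$ in the test set, the $F$-coordinates give $g\in(U_f\rightthreetimes W_{\rho_F(f)})\cap\rho_F^{-1}(W_{\rho_F(f)})\subset O_f\cap N_f$, hence $g\in O_f$ \emph{and} $g\in N_f$, and the latter makes $\alpha_g=\bar\alpha_g$ carry $W_1$ into $W_{\rho_S(s)}$, so the $S$-coordinate conditions rewrite, via $\rho_S(t)=\bar\alpha_g(\bar\alpha_g^{-1}(\rho_S(t)))$, as $t\in(U_s\rightthreetimes W_{\rho_S(s)})\cap\rho_S^{-1}(W_{\rho_S(s)})\subset O_s$; thus $(t,g)\in O_s\times O_f$.

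For the ``only if'' part, assume $\mathbf S\times_\alpha\mathbf F$ ditopological. To see $\rho_S$ is dicontinuous at $s$, fix any $f\in F$, apply the dicontinuity of $\rho$ at $(s,f)$ to $O_s\times F$, shrink the witnesses to product form $U_s^0\times U_f^0$ and $W_1\times W_f^0$, and take as $\rho_S$-witnesses $U_s^0$ and $W_{\rho_S(s)}:=\bar\alpha_f(W_1)$ (a neighborhood of $\rho_S(s)=\bar\alpha_f(r)$ since $\bar\alpha_f$ is a homeomorphism of $\rho_S(S)$); a point $t$ of the $\rho_S$-test set lifts to $(t,f)$ by replacing a first-coordinate division witness $a\in W_{\rho_S(s)}$ with $\bar\alpha_f^{-1}(a)\in W_1$ and pairing with $\rho_F(f)$, which puts $(t,f)$ in the $\rho$-test set of $(s,f)$, so $t\in O_s$. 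To see $\rho_F$ is dicontinuous at $f$, fix $s\in S$ and apply the dicontinuity of $\rho$ at $(s,f)$ to $S\times O_f$, getting product witnesses $U_s^0\times U_f^0$, $W_1\times W_f^0$; a point $g$ of the prospective $\rho_F$-test set should be lifted to some $(t,g)$, and the first coordinate of $\rho^{-1}(W_1\times W_f^0)$ forces $\bar\alpha_g^{-1}(\rho_S(t))\in W_1$. Picking the division witness $a_1\in W_1$ with $\bar\alpha_g(a_1)=\rho_S(t)$ and using $\alpha_g(a_1)=\bar\alpha_g(a_1)$, the lift comes down to finding $t\in U_s^0$ with $\rho_S(t)\in\bar\alpha_g(W_1)$, which, by continuity of $\alpha^-$, holds (with $t=s$) once $g$ lies in a suitable neighborhood $M$ of $f$; the remaining task is to shrink the prospective $\rho_F$-witnesses $U_f^0,W_f^0$ so that their test set gets trapped inside $M\cap O_f$, using the continuity of the multiplication and the identity $g=g\,\rho_F(g)$.

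The step I expect to be the main obstacle is exactly that last one. In the left-handed Theorem~\ref{SDL} the action is assumed to respect $\lambda_F$, which pins $\bar\alpha_w$ to the identity on the relevant set and makes the lift immediate; here only $\rho_S$-invertibility is available, so $\bar\alpha_g^{-1}$ genuinely depends on $g$, and lifting a test point for $\rho_F$ requires first confining the relevant $g$'s to a neighborhood of $f$ on which $\bar\alpha_g^{-1}$ stays near $\bar\alpha_f^{-1}=\alpha^-(f,\cdot)$. Making that confinement rigorous with only continuity (and no uniformizability) at hand is the delicate point; in the ``if'' direction the same phenomenon causes no trouble, because one may simply intersect the ambient neighborhood of $f$ with the set $N_f$ produced by the continuity of $\alpha$.
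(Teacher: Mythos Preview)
Your ``if'' direction is essentially the paper's argument, and your ``only if'' argument for $\rho_S$ is a valid variant (you push $W_1$ forward by the homeomorphism $\bar\alpha_f$, whereas the paper picks $f\in\rho_F(F)$ so that $\bar\alpha_f=\mathrm{id}$).

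The genuine gap is exactly where you flag it: your treatment of $\rho_F$ in the ``only if'' part. Your plan is to lift a test point $g$ to $(s,g)$, and for this you need $\bar\alpha_g^{-1}(\rho_S(s))\in W_1$; you propose to arrange this by confining $g$ to a neighborhood $M$ of $f$ and then ``shrinking the prospective $\rho_F$-witnesses $U_f^0,W_f^0$ so that their test set gets trapped inside $M$''. But trapping a set of the form $(U_f^0\rightthreetimes W_f^0)\cap\rho_F^{-1}(W_f^0)$ inside a prescribed neighborhood of $f$ is precisely what dicontinuity of $\rho_F$ asserts, which is what you are trying to prove. Continuity of multiplication and the identity $g=g\,\rho_F(g)$ do not help: they let you push forward from neighborhoods of $g$ and $\rho_F(g)$ to neighborhoods of $g$, not the reverse.

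The idea you are missing is purely algebraic and follows from $\rho_S$-invertibility alone: for every $h\in\rho_F(F)$ the map $\bar\alpha_h$ is the identity on $\rho_S(S)$ (write $h=\rho_F(g')$; then $g'=g'h$ gives $\bar\alpha_{g'}=\bar\alpha_{g'}\circ\bar\alpha_h$, and cancel the bijection $\bar\alpha_{g'}$). Hence for any division witness $h\in W_{\rho_F(f)}\subset\rho_F(F)$ one has $\bar\alpha_g=\bar\alpha_{gh}$, so the troublesome quantities $\bar\alpha_g^{-1}(\rho_S(s))$ and $s\cdot\alpha_g(r)$ equal $\beta(gh)$ and $\gamma(gh)$ for the continuous functions $\beta(\cdot)=\bar\alpha_{(\cdot)}^{-1}(\rho_S(s))$ and $\gamma(\cdot)=s\cdot\alpha(\cdot,r)$. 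Now you control $gh\in U_f$, not $g$, and it suffices to shrink $U_f$ to $U_f'$ with $\beta(U_f')\subset W_r$ and $\gamma(U_f')\subset U_s$; no confinement of $g$ is needed. This is how the paper closes the argument, and the same identity-on-$\rho_F(F)$ observation is what the paper uses (instead of your homeomorphism trick) in the $\rho_S$ step as well.
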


\begin{proof} To prove the ``only if'' part, assume that the topological right unosemigroup $\mathbf S\times_\alpha \mathbf F=(S\times_\alpha F,\rho)$
is ditopological. We need to show that the right unit operations $\rho_S$ and $\rho_F$ are dicontinuous.
\smallskip

To prove the dicontinuity of the right unit operation $\rho_S$, fix any point $s\in S$ and a neighborhood $O_s\subset S$ of $s$. Fix any point $f\in\rho_F(F)\subset F$ and
consider the homomorphism $\alpha_f:S\to S$, whose restriction $\bar\alpha_f$ is a bijective map of the
set $\rho_S(S)$. We claim that $\bar\alpha_f$ is the identity map of $\rho_S(S)$. It follows from $f\in\rho_F(F)$ that $f=\rho_F(g)$ for some $g\in F$. The equality $g=g\cdot\rho_F(g)=gf$ implies that $\bar\alpha_g=\bar\alpha_g\circ\bar \alpha_f$, which is possible only in case of identity map $\bar\alpha_f$.

Now consider the point $(s,f)\in S\times_\alpha F$ and its neighborhood $O_{(s,f)}=O_s\times F$.
It follows that $\rho(s,f)=(\bar\alpha_f^{-1}(\rho_S(s)),\rho_S(f))=(\rho_S(s),\rho_F(f))$.
 The dicontinuity of the right unit operation $\rho$ on $S\times_\alpha F$ yields neighborhoods $U_{(s,f)}\subset S\times_\alpha F$
 and $W_{\rho(s,f)}\subset \rho(S\times_\alpha F)=\rho_S(S)\times\rho_F(F)$ of $(s,f)$ and $\rho(s,f)=(\rho_S(s),\rho_F(f))$ such that
$(U_{(s,f)}\rightthreetimes W_{\rho(s,f)})\cap\rho^{-1}(W_{\rho(s,f)})\subset O_{(s,f)}$.
 We lose no
generality assuming that $U_{(s,f)}=U_s\times U_f$ and
$W_{\rho(s,f)}=W_{\rho_S(s)}\times W_{\rho_F(f)}$ for
some open sets $U_s\subset S$, $W_{\rho_S(s)}\subset \rho_S(S)$,
$U_f\subset F$, and $W_{\rho_F(f)}\subset \rho_F(F)$.

We claim that the neighborhoods $U_s$ and $W_{\rho_S(s)}$ have the
required property: $(U_s\rightthreetimes
W_{\rho_S(s)})\cap\rho_S^{-1}(W_{\rho_S(s)})\subset O_s$. Given
any point $t\in (U_s\rightthreetimes
W_{\rho_S(s)})\cap\rho_S^{-1}(W_{\rho_S(s)})$, find a point $w\in
W_{\rho_S(s)}\subset\rho_S(S)$ such that $tw\in U_s$. Consider the
point $(t,f)\in S\times_\alpha F$ and $(w,\rho_F(f))\in
W_{\rho_S(s)}\times W_{\rho_F(f)}=W_{\rho(s,f)}$ and observe that
$$(t,f)\cdot (w,\rho_F(f))=(t\cdot\bar\alpha_f(w),f\cdot\rho_S(f))=(tw,f)\in U_s\times U_f.$$
Since $$\rho(t,f)=(\bar\alpha_f^{-1}(\rho_S(t)),\rho_F(f))=(\rho_S(t),\rho_F(f))\in W_{\rho_S(s)}\times W_{\rho_F(f)}=W_{\rho(s,f)},$$ we get the desired inclusion
$$(t,f)\in (U_{(s,f)} \rightthreetimes W_{\rho(s,f)})\cap\rho^{-1}(W_{\rho(s,f)})\subset O_{(s,f)}=O_s\times F,$$
which implies $t\in O_s$.
\smallskip

Next, we show that the right unit operation $\rho_F$ on $F$ is
dicontinuous at every point $f\in F$. Fix any neighborhood $O_f$
of $f$ in $F$. Fix any point $s\in S$ and consider the pair
$(s,f)$ and its neighborhood $O_{(s,f)}=S\times O_f$ in
$S\times_\alpha F$. The dicontinuity of the right unit operation
$\rho$ on $S\times_\alpha F$ yields neighborhoods
$U_{(s,f)}\subset S\times_\alpha F$ and
$W_{\rho(s,f)}\subset\rho(S\times_\alpha
F)=\rho_S(S)\times\rho_F(F)$ of the elements $(s,f)$ and
$\rho(s,f)$ such that $(U_{(s,f)} \rightthreetimes
W_{\rho(s,f)})\cap \rho^{-1}(W_{\rho(s,f)})\subset O_{(s,f)}$.
Consider the point $r=\bar\alpha^{-1}_f(\rho_S(s))$ and observe
that $\rho(s,f)=(r,\rho_F(f))$. Without lose of generality we can
assume that $W_{\rho(s,f)}=W_{r}\times W_{\rho_F(f)}$ and
$U_{(s,f)}=U_s\times U_f$ for some open sets $W_r\subset
\rho_S(S)$, $W_{\rho_F(f)}\subset \rho_F(F)$, $U_s\subset S$ and
$U_f\subset F$.

Consider the continuous functions $\beta:F\to\rho_S(S)$, $\beta:g\mapsto\bar\alpha_g^{-1}(\rho_S(s))=\alpha^-(g,\rho_S(s))$, and $\gamma:F\to S$, $\gamma:g\mapsto s\cdot\alpha(g,r)$, and observe that $\beta(f)=r$ and $$\gamma(f)=s\cdot \alpha_f(r)=s\cdot\bar\alpha_f\circ\bar\alpha^{-1}_f(\rho_S(s))=s\cdot\rho_S(s)=s.$$
Using the continuity of the functions $\beta$ and $\gamma$, find a neighborhood $U'_f\subset U_f$ of $f$ such that $\beta(U'_f)\subset W_r$
and $\gamma(U'_f)\subset U_s$.

We claim that the neighborhoods $U'_f$ and $W_{\rho_F(f)}$ have the required property:
$(U'_f\rightthreetimes W_{\rho_F(f)})\cap\rho_F^{-1}(W_{\rho_F(f)})\subset O_f$.
Given any point $g\in (U'_f\rightthreetimes W_{\rho_F(f)})\cap\rho_F^{-1}(W_{\rho_F(f)})$, find a point $h\in W_{\rho_F(f)}\subset\rho_F(F)$ with $gh\in U'_f$. Consider the points $(s,g)\in S\times_\alpha F$ and $(r,h)\in W_r\times W_{\rho_F(f)}=W_{\rho(s,f)}$. Since $h\in\rho_F(F)$, the map $\bar\alpha_h$ is an identity homeomorphism of $\rho_S(S)$. Hence, $\alpha_h(r)=r$ and
$$s\cdot \alpha_g(r)=s\cdot\alpha_g(\alpha_h(r))=s\cdot\alpha_{gh}(r)=\gamma(gh)\in\gamma(U'_f)\subset U_s.$$

Also $\bar\alpha_{gh}=\bar\alpha_g\circ\bar\alpha_h=\bar\alpha_g$ implies that$$\rho(s,g)=(\bar\alpha_g^{-1}(\rho_S(s)),\rho_F(g))=(\bar\alpha^{-1}_{gh}(\rho_S(s)),\rho_F(g))=
(\beta(gh),\rho_F(g))\in \beta(U'_f)\times W_{\rho_F(f)}\subset W_r\times W_{\rho_F(f)}=W_{\rho(s,f)}.$$ Since
$$(s,g)\cdot (r,h)=(s\cdot \alpha_g(r),gh)\in U_s\times U'_f\subset U_{(s,f)},$$
we get $$(s,g)\in (U_{(s,f)} \rightthreetimes W_{\rho(s,f)})\cap\rho^{-1}(W_{\rho(s,f)})\subset O_{(s,f)}=S\times O_f$$and hence $g\in O_f$, which completes the proof of the ``only if'' part of the theorem.
\smallskip

To prove the ``if'' part, assume that the topological right unosemigroups $(S,\rho_S)$ and $(F,\rho_F)$ are ditopological. We need to check that the right unit
operation $\rho:S\times_\alpha F \rightarrow
S\times_\alpha F$, $\rho:(s,f)\mapsto (\bar\alpha_f^{-1}(\rho_S(s)),\rho_F(f))$, is
dicontinuous at each point $(s,f)\in S\times_\alpha F$.

Fix any neighborhood $O_{(s,f)}$ of the point $(s,f)$ in the
topological semigroup $S\times_\alpha F$. We lose no generality
assuming that this neighborhood is of basic form:
$O_{(s,f)}=O_s\times O_f$ where $O_s$ and $O_f$ are open
neighborhoods of $s$ and $f$ in $S$ and $F$, respectively. The
dicontinuity of the right unit operation $\rho_S$ yields
neighborhoods $U_s\subset S$ and $W_{\rho_S(s)}\subset \rho_S(S)$ of $s$ and $\rho_S(s)$  such that $(U_s \rightthreetimes W_{\rho_S(s)})\cap
\rho_S^{-1}(W_{\rho_S(s)})\subset O_s$.

Now consider the point $r=\bar\alpha_{f}^{-1}(\rho_S(s))\in \rho_S(S)$ and
observe that
$$\alpha(f,r)=\bar\alpha_f(r)=\bar\alpha_f\circ \bar\alpha_{f}^{-1}(\rho_S(s))=\rho_S(s)\in
W_{\rho_S(s)}.$$ The continuity of the action
$\alpha|(F\times\rho_S(S))$ yields a neighborhood $O'_f\subset
O_f$ of $f$ and a neighborhood $W_{r}\subset \rho_S(S)$ of $r$
such that $\alpha(O'_f\times W_{r})\subset W_{\rho_S(s)}$.

Since the right unit operation $\rho_F$ is dicontinuous at $f$, there
are neighborhoods $U_f\subset F$ and $W_{\rho_F(f)}\subset \rho_F(F)$ of $f$
and $\rho_F(f)$ such that  $(U_f \rightthreetimes W_{\rho_F(f)})\cap \rho_F^{-1}(W_{\rho_F(f)})\subset O'_f$.

The $\rho_S$-invertibility of the action $\alpha$ guarantees that $\rho(S\times_\alpha F)=\{(\bar\alpha_f^{-1}(\rho_S(s)),\rho_F(f)):(s,f)\in S\times_\alpha F\}=\rho_S(S)\times\rho_F(F)$, which implies that the product $W_{\rho(s,f)}=W_r\times W_{\rho_F(f)}$ is a neighborhood of the point $\rho(s,f)=(\bar\alpha_f^{-1}(\rho_S(s)),\rho_F(f))=(r,\rho_F(f))$ in $\rho(S\times_\alpha F)=\rho_S(S)\times\rho_F(F)$. We claim that the
neighborhoods $U_{(s,f)}=U_s\times U_f$ and $W_{\rho(s,f)}=W_r\times W_{\rho_F(f)}$ have the required property: $(U_{(s,f)}\rightthreetimes W_{\rho(s,f)})\cap\rho^{-1}(W_{\rho(s,f)})\subset O_{(s,f)}$.
Fix any point $(t,g)\in (U_{(s,f)} \rightthreetimes W_{\rho(s,f)})\cap\rho^{-1}(W_{\rho(s,f)})$. It follows that $(t,g)(w,h)\in U_s\times U_f$ for some $(w,h)\in W_{\rho(s,f)}$.

Observe that
$$W_{r}\times W_{\rho_F(f)}=W_{\rho(s,f)}\ni\rho(t,g)=
(\bar\alpha^{-1}_g(\rho_S(t)),\rho_F(g))$$implies $\rho_F(g)\in
W_{\rho_F(f)}$ and $\bar\alpha^{-1}_g(\rho_S(t))\in W_{r}$. Also
$(w,h)\in W_{\rho(s,f)}=W_{r}\times W_{\rho_F(f)}$ yields
$w\in W_{r}$ and $h\in W_{\rho_F(f)}$.

It follows from $\rho_F(g),h\in W_{\rho_F(f)}$ and $gh\in U_f$ that
$g\in O'_f\subset O_f$. Then
$$\rho_S(t)=\bar\alpha_g\circ \bar\alpha_g^{-1}(\rho_S(t))=\alpha(g,\bar\alpha_{g}^{-1}(\rho_S(t)))\in
\alpha(O'_f\times W_r)\subset W_{\rho_S(s)}.$$ By the same reason,
$$\alpha_g(w)=\alpha(g,w)\in\alpha(O_f'\times W_r)\subset W_{\rho_S(s)}.$$
Observe also that
$$(t\cdot \alpha_g(w),gh)=(t,g)\cdot(w,h)\in U_{(s,t)}=U_s\times U_f$$ yields $gh\in U_f$ and $t\cdot \alpha_g(w)\in U_s$. The choice of the neighborhoods $U_s$ and $W_{\rho_S(s)}\supset\{\rho_S(t),\alpha_g(w)\}$ guarantees that $t\in O_s$.
So, $(t,g)\in O_s\times O_f=O_{(s,f)}$.
\end{proof}

The above discussion implies that for topological unosemigroups $\mathbf F=(F,\lambda_F,\rho_F)$, $\mathbf S=(S,\lambda_S,\rho_S)$ and a $\lambda_F$-respecting $\rho_S$-invertible continuous action $\alpha:F\times S\to S$ of $F$ on $S$, the triple $\mathbf S\times_\alpha\mathbf F=(S\times_\alpha F,\lambda,\rho)$ is a topological unosemigroup. This topological  unosemigroup will be called the {\em semidirect product} of the topological unosemigroups $\mathbf F$ and $\mathbf S$. By Theorems~\ref{SDL} and \ref{SDR} the topological unosemigroup $\mathbf S\times_\alpha F$ is ditopological if and only if so are the topological unosemigroups $\mathbf S$ and $\mathbf F$.

\subsection{Hartman-Mycielski Extension}

 Given a topological space $X$, for every $n \in \mathbb{N}$ by $HM_{n}(X)$ we denote the set of all functions $ f:[0,1) \rightarrow X $ for which there exists a
sequence $0 = a_{0} < a_{1} <\dots < a_{n} = 1$ such that $f$ is
constant on each interval $[a_{i}, a_{i+1}), 0 \leq i < n$. The
union $HM(X)=\bigcup_{n \in \mathbb{N}}HM_{n}(X)$ is called the
{\em Hartman-Mycielski extension} of the space $X$, see \cite{HM},
\cite{BM}.

A neighborhood sub-base of the topology of $HM(X)$ at  $f \in
HM(X)$ consists of sets $N(a, b, V, \varepsilon)$, where
\begin{enumerate}
\item $0 \leq a <b \leq 1$, $f$ is constant on $[a,b)$, $V$ is a
neighborhood of $f(a)$ in $X$, and $\varepsilon>0$, \item $g \in
N(a, b, V, \varepsilon)$ means that $|\{t \in [a, b) : g(t) \notin
V \}|<\varepsilon $, where $|\cdot|$ denotes the Lebesgue measure.
\end{enumerate}

If $X$ is a topological semigroup, then $HM(X)$ also is a
topological semigroup  with respect to the pointwise
operations of multiplication of functions. Moreover, for any continuous left (right) unit operation $u_X:X\to X$ on $X$, the unary operation $u_{HM(X)}:HM(X)\to HM(X)$, $u_{HM(X)}:f\mapsto u\circ f$, is a continuous left (right) unit operation on $HM(X)$ , see
\cite[Proposition 2]{BH}.

\begin{theorem}\label{HM} If $\mathbf X=(X,\lambda_X)$ is a ditopological left unosemigroup, then so is its Hartman-Mycielski extension $HM(\mathbf X)=(HM(X),\lambda_{HM(X)})$.
\end{theorem}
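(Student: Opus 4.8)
The plan is to check directly that the induced left unit operation $\lambda_{HM(X)}\colon f\mapsto\lambda_X\circ f$ is dicontinuous at every point $f\in HM(X)$; the point is that the dicontinuity inequality for $HM(X)$ can be reduced, interval by interval of the step function $f$, to the dicontinuity inequality for $\lambda_X$ on $X$, at the cost of turning ``pointwise'' conditions into ``the exceptional set has small Lebesgue measure'' conditions.

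\textbf{Reduction of the neighborhood.} Fix $f\in HM(X)$ and a neighborhood $O_f$ of $f$. A basic neighborhood of $f$ is a finite intersection of subbasic sets $N(a_j,b_j,V_j,\varepsilon_j)$ on whose intervals $f$ is constant. Replacing all $\varepsilon_j$ by their minimum, refining $\{a_j,b_j\}$ together with the finitely many discontinuity points of $f$ into one partition $0=c_0<c_1<\dots<c_m=1$, using additivity of the measure over the pieces of each $[a_j,b_j)$ and intersecting the $V_j$'s assigned to each piece, one sees that the sets of the form $\bigcap_{i=1}^m N(c_{i-1},c_i,V_i,\varepsilon)$ --- where $f$ is constant $=x_i$ on $[c_{i-1},c_i)$, each $V_i$ is a neighborhood of $x_i$, and $\varepsilon>0$ --- form a neighborhood base at $f$. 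So we may assume $O_f=\bigcap_{i=1}^m N(c_{i-1},c_i,V_i,\varepsilon)$.

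\textbf{Choice of witnesses.} For each $i\le m$, the dicontinuity of $\lambda_X$ at $x_i$, applied to the neighborhood $V_i$ of $x_i$, yields a neighborhood $U_{x_i}\subset X$ of $x_i$ and a neighborhood $W_i\subset\lambda_X(X)$ of $\lambda_X(x_i)$, which we may take open in $\lambda_X(X)$, with $(W_i\leftthreetimes U_{x_i})\cap\lambda_X^{-1}(W_i)\subset V_i$; fix an open $\widetilde W_i\subset X$ with $\widetilde W_i\cap\lambda_X(X)=W_i$. Put $\delta=\varepsilon/4$ and
$$U_f=\bigcap_{i=1}^m N(c_{i-1},c_i,U_{x_i},\delta),\qquad W_{\lambda_{HM(X)}(f)}=\lambda_{HM(X)}(HM(X))\cap\bigcap_{i=1}^m N(c_{i-1},c_i,\widetilde W_i,\delta),$$
which are neighborhoods of $f$ and of $\lambda_{HM(X)}(f)=\lambda_X\circ f$ in $HM(X)$ and in $\lambda_{HM(X)}(HM(X))$, respectively, since $f$ is constant $=x_i$ and $\lambda_X\circ f$ is constant $=\lambda_X(x_i)\in\widetilde W_i$ on each $[c_{i-1},c_i)$.

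\textbf{Verification and the main point.} Take $g\in(W_{\lambda_{HM(X)}(f)}\leftthreetimes U_f)\cap\lambda_{HM(X)}^{-1}(W_{\lambda_{HM(X)}(f)})$; then there is $w\in W_{\lambda_{HM(X)}(f)}$ with the pointwise product $wg$ lying in $U_f$, while $\lambda_X\circ g\in W_{\lambda_{HM(X)}(f)}$, and $w$ takes values in $\lambda_X(X)$ (being of the form $\lambda_X\circ h$). Fix $i\le m$. Each of the three sets $\{t\in[c_{i-1},c_i):w(t)\notin\widetilde W_i\}$, $\{t:w(t)g(t)\notin U_{x_i}\}$, $\{t:\lambda_X(g(t))\notin\widetilde W_i\}$ has measure $<\delta$, so for $t$ outside their union we get $w(t)\in W_i$, $w(t)g(t)\in U_{x_i}$, $\lambda_X(g(t))\in W_i$, hence $g(t)\in\bigl(w(t)\leftthreetimes w(t)g(t)\bigr)\cap\lambda_X^{-1}(W_i)\subset(W_i\leftthreetimes U_{x_i})\cap\lambda_X^{-1}(W_i)\subset V_i$. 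Thus $\{t\in[c_{i-1},c_i):g(t)\notin V_i\}$ has measure $<3\delta<\varepsilon$, i.e. $g\in N(c_{i-1},c_i,V_i,\varepsilon)$; as $i$ is arbitrary, $g\in O_f$. The only genuinely non-formal step is this last one: combining the three small exceptional sets (from $w$, from $wg$, and from $\lambda_X\circ g$) and keeping their total measure below $\varepsilon$; everything else is bookkeeping with the subbasic sets $N(a,b,V,\varepsilon)$ and with the subspace topology on $\lambda_{HM(X)}(HM(X))=HM(\lambda_X(X))$.
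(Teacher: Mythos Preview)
Your proof is correct and follows essentially the same route as the paper's: reduce the dicontinuity condition on $HM(X)$ to the one on $X$ by bounding three ``exceptional sets'' (coming from $w$, $wg$, and $\lambda_X\circ g$) of small Lebesgue measure. The only difference is cosmetic: the paper verifies dicontinuity against a \emph{single} sub-basic set $N(a,b,O_{f(a)},\varepsilon)$ (with $\varepsilon/3$ in place of your $\varepsilon/4$), which suffices because for a finite intersection $O_f=\bigcap_j O_f^j$ one may take the intersection of the witnesses $U_f^j$ and $W^j$; your preliminary reduction to a partition-indexed basic neighborhood is therefore unnecessary, though harmless.
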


\begin{proof} For simplicity of notation, we write $\lambda$
instead of $\lambda_{HM(X)}$. Assume that $X$ is a ditopological left unosemigroup. To show
that the topological left unosemigroup $HM(X)$ is ditopological, fix any
element $f \in HM(X)$ and its sub-basic neighborhood
$O_f=N(a,b,O_{f(a)},\varepsilon)$ such that $f$ is constant on the half-interval $[a,b)\subset[0,1)$ and
$O_{f(a)}$ is an open neighborhood of $f(a)$ in $X$. Since the topological left unosemigroup $X$ is ditopological, there are neighborhoods $U_x\subset S$
and $W_{\lambda_X(x)}\subset \lambda_X(X)$ of the points $x=f(a)$ and
$\lambda_X(x)=\lambda_X(f(a))$ such that
$(W_{\lambda_X(x)}\leftthreetimes U_x)\cap
\lambda_X^{-1}(W_{\lambda_X(x)})\subset O_x$.

Consider the open neighborhoods
$U_f=N(a,b,U_{x},\frac{\varepsilon}{3})\subset HM(X)$ of $f$ and
$$W_{\lambda
(f)}=\lambda(HM(X))\cap
N(a,b,W_{\lambda_X(x)},\frac{\varepsilon}{3})\subset
\lambda(HM(X))$$ of $\lambda(f)$. We claim that each function
$g\in (W_{\lambda(f)}\leftthreetimes U_f)\cap
\lambda^{-1}(W_{\lambda(f)})\subset HM(X)$ belongs to the
neighborhood $O_f$. The inclusion $\lambda(g)\in
W_{\lambda(f)}=N(a,b,W_{\lambda_X(x)},\frac\e3)$ guarantees that
the set $A=\{t\in[a,b):\lambda_X(f(t))\notin W_{\lambda_X(x)}\}$
has Lebesgue measure $<\frac{\e}3$. On the other hand, the
inclusion $wg\in U_f=N(a,b,U_x,\frac\e3)$ implies that the set
$B=\{t\in[a,b):w(t)\cdot g(t)\notin U_x\}$ has Lebesgue measure
$<\frac\e3$. Finally, the inclusion $w\in W_{\lambda(f)}$ implies
that the set $C=\{t\in[a,b):w(t)\notin W_{\lambda_X(x)}\}$ has
Lebesgue measure $<\frac\e3$. Then the union $A\cup B\cup C$ has
Lebesgue measure $<\e$ and for each $t\in[a,b)\setminus(A\cup
B\cup C)$ we get $\lambda_X(g(t))\in W_{\lambda_X(x)}$, $ w(t)\in
W_{\lambda_X(x)}$ and $w(t)g(t)\in U_x$. Then the choice of the
neighborhoods $U_x$ and $W_{\lambda_X(x)}$ guarantees that
$g(t)\in O_{x}=O_{f(a)}$ and hence $g\in N(a,b,O_{f(a)},\e)=O_f$.
\end{proof}

By analogy we can prove:

\begin{theorem}\label{HM2} If $\mathbf X=(X,\rho_X)$ is a ditopological right unosemigroup, then so is its Hartman-Mycielski extension $HM(\mathbf X)=(HM(X),\rho_{HM(X)})$.
\end{theorem}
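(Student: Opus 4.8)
The plan is to mirror the proof of Theorem~\ref{HM} verbatim, exchanging the roles of the left division operation $\leftthreetimes$ and the left unit operation $\lambda_X$ by the right division operation $\rightthreetimes$ and the right unit operation $\rho_X$. So I would fix an element $f\in HM(X)$ together with a sub-basic neighborhood $O_f=N(a,b,O_{f(a)},\e)$, where $f$ is constant on $[a,b)$ and $O_{f(a)}\ni f(a)=x$ is open in $X$. Applying the dicontinuity of $\rho_X$ at the point $x$, I obtain neighborhoods $U_x\subset X$ of $x$ and $W_{\rho_X(x)}\subset\rho_X(X)$ of $\rho_X(x)$ with $(U_x\rightthreetimes W_{\rho_X(x)})\cap\rho_X^{-1}(W_{\rho_X(x)})\subset O_x$.

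Then I would propose the witnessing neighborhoods $U_f=N(a,b,U_x,\tfrac{\e}{3})$ of $f$ in $HM(X)$ and $W_{\rho(f)}=\rho(HM(X))\cap N(a,b,W_{\rho_X(x)},\tfrac{\e}{3})$ of $\rho(f)=\rho_X\circ f$ in $\rho(HM(X))$. Given any $g\in (U_f\rightthreetimes W_{\rho(f)})\cap\rho^{-1}(W_{\rho(f)})$, there is $w\in W_{\rho(f)}$ with $gw\in U_f$, and also $\rho(g)=\rho_X\circ g\in W_{\rho(f)}$. The three membership conditions $\rho(g)\in N(a,b,W_{\rho_X(x)},\tfrac\e3)$, $gw\in N(a,b,U_x,\tfrac\e3)$ and $w\in N(a,b,W_{\rho_X(x)},\tfrac\e3)$ give, via the definition of the sub-basic sets and the half-interval $[a,b)$, three subsets $A,B,C\subset[a,b)$ of Lebesgue measure $<\tfrac\e3$ outside of which $\rho_X(g(t))\in W_{\rho_X(x)}$, $g(t)\cdot w(t)\in U_x$, and $w(t)\in W_{\rho_X(x)}$ all hold simultaneously. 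For $t\in[a,b)\setminus(A\cup B\cup C)$ these three conditions say exactly that $g(t)\in (U_x\rightthreetimes W_{\rho_X(x)})\cap\rho_X^{-1}(W_{\rho_X(x)})\subset O_x$, so the measure of $\{t\in[a,b):g(t)\notin O_x\}$ is at most $|A\cup B\cup C|<\e$, i.e. $g\in N(a,b,O_{f(a)},\e)=O_f$.

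The only genuinely new bookkeeping compared with Theorem~\ref{HM} is the orientation of the pointwise product in the condition coming from $gw\in U_f$: one needs the pointwise formula $(gw)(t)=g(t)w(t)$ (which is immediate from the definition of the semigroup operation on $HM(X)$) and the fact that the half-interval $[a,b)$ on which $f$ is constant is simultaneously usable in all three sub-basic neighborhoods, which is guaranteed because $U_f$, $W_{\rho(f)}$ and the element $w$ are all built on the same $[a,b)$. There is no real obstacle; the statement follows the same template as Theorems~\ref{HM}, \ref{t3.2} and \ref{RPR}, each obtained from its left-handed counterpart by a symmetric translation. Hence $HM(\mathbf X)=(HM(X),\rho_{HM(X)})$ is a ditopological right unosemigroup.
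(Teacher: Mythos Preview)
Your proposal is correct and is precisely the argument the paper has in mind: the paper does not give an explicit proof of Theorem~\ref{HM2} but simply says ``By analogy we can prove'', and your write-up is exactly the mirror of the proof of Theorem~\ref{HM} with $\lambda$ replaced by $\rho$ and $\leftthreetimes$ by $\rightthreetimes$. The bookkeeping you flag (pointwise product formula, same half-interval $[a,b)$ used throughout) is routine and causes no trouble.
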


Theorems~\ref{HM} and \ref{HM2} imply:

\begin{theorem}\label{HM3} If $\mathbf X=(X,\lambda_X,\rho_X)$ is a ditopological unosemigroup, then so is its Hartman-Mycielski extension $HM(\mathbf X)=(HM(X),\lambda_{HM(X)},\rho_{HM(X)})$.
\end{theorem}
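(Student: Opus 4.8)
The plan is to deduce Theorem~\ref{HM3} directly from Theorems~\ref{HM} and \ref{HM2}, exactly as the phrase ``Theorems~\ref{HM} and \ref{HM2} imply'' suggests, so that essentially no new computation is needed. First I would unpack the hypothesis: a ditopological unosemigroup $\mathbf X=(X,\lambda_X,\rho_X)$ is by definition a topological semigroup carrying a dicontinuous left unit operation $\lambda_X$ and a dicontinuous right unit operation $\rho_X$; in particular $(X,\lambda_X)$ is a ditopological left unosemigroup and $(X,\rho_X)$ is a ditopological right unosemigroup. Then I would recall (from the discussion preceding Theorem~\ref{HM}) that $HM(X)$ with the pointwise multiplication is a topological semigroup, and that for each continuous left (resp. right) unit operation $u_X$ on $X$ the induced operation $u_{HM(X)}:f\mapsto u_X\circ f$ is a continuous left (resp. right) unit operation on $HM(X)$; applying this to both $\lambda_X$ and $\rho_X$ shows that $HM(\mathbf X)=(HM(X),\lambda_{HM(X)},\rho_{HM(X)})$ is a topological unosemigroup, so the statement is at least well-posed.

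Next I would invoke Theorem~\ref{HM} with the ditopological left unosemigroup $(X,\lambda_X)$ to conclude that $(HM(X),\lambda_{HM(X)})$ is a ditopological left unosemigroup, i.e. $\lambda_{HM(X)}$ is a dicontinuous left unit operation on $HM(X)$. Symmetrically, I would invoke Theorem~\ref{HM2} with the ditopological right unosemigroup $(X,\rho_X)$ to conclude that $(HM(X),\rho_{HM(X)})$ is a ditopological right unosemigroup, i.e. $\rho_{HM(X)}$ is dicontinuous. Since the underlying topological semigroup $HM(X)$ and the pointwise operations are the same in both applications, these two conclusions combine to say that $HM(X)$ is a topological semigroup equipped with a dicontinuous left unit operation $\lambda_{HM(X)}$ and a dicontinuous right unit operation $\rho_{HM(X)}$ — which is precisely the definition of a ditopological unosemigroup.

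There is essentially no obstacle here: the only thing to be mildly careful about is that the left and right constructions are genuinely compatible, meaning that the ``left'' Hartman--Mycielski extension and the ``right'' one are built on literally the same space $HM(X)$ with the same multiplication, so that endowing it simultaneously with $\lambda_{HM(X)}$ and $\rho_{HM(X)}$ is legitimate; this is immediate from the definitions. I would therefore write the proof as a two-line argument: ``$HM(\mathbf X)$ is a topological unosemigroup by the remarks preceding Theorem~\ref{HM}; its left unit operation $\lambda_{HM(X)}$ is dicontinuous by Theorem~\ref{HM} and its right unit operation $\rho_{HM(X)}$ is dicontinuous by Theorem~\ref{HM2}; hence $HM(\mathbf X)$ is a ditopological unosemigroup.'' If one wanted the paper to be fully self-contained, the only extra step would be to note explicitly that Theorem~\ref{HM2} is proved by the obvious left-right mirror of the proof of Theorem~\ref{HM} (replacing $W_{\lambda_X(x)}\leftthreetimes U_x$ by $U_x\rightthreetimes W_{\rho_X(x)}$ throughout and $\lambda$ by $\rho$), but since Theorem~\ref{HM2} is already stated and available, nothing further is required.
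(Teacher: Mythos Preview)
Your proposal is correct and matches the paper's approach exactly: the paper gives no separate proof of Theorem~\ref{HM3} beyond the sentence ``Theorems~\ref{HM} and \ref{HM2} imply,'' and your argument simply spells out this implication. The one mild care you flag---that the left and right Hartman--Mycielski extensions live on the same underlying topological semigroup $HM(X)$---is indeed the only point worth noting, and it is immediate.
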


Since the Hartman-Mycielski space $HM(X)$ is contractible
\cite{HM}, Theorems~\ref{HM}--\ref{HM3} imply:

\begin{corollary} Each topological (left, right) unosemigroup is topologically isomorphic to (left, right) subunosemigroup of a contractible topological (left, right) unosemigroup.
\end{corollary}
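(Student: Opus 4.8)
The plan is to take as the contractible topological (left, right) unosemigroup the Hartman--Mycielski extension $HM(X)$ of the given topological (left, right) unosemigroup $X$, and to embed $X$ into it as the subunosemigroup of constant functions. By the discussion preceding Theorem~\ref{HM}, $HM(X)$ is again a topological (left, right) unosemigroup (with pointwise operations and unit operation $u_{HM(X)}\colon f\mapsto u_X\circ f$); by Theorems~\ref{HM}--\ref{HM3} it is moreover ditopological whenever $X$ is; and by \cite{HM} the space $HM(X)$ is contractible. So what remains is to produce the embedding.

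First I would consider the map $\delta_X\colon X\to HM(X)$ assigning to each point $x\in X$ the constant function $[0,1)\to X$ with value $x$. Since the multiplication and unit operation(s) on $HM(X)$ are computed coordinatewise, $\delta_X$ is a (left, right) unosemigroup homomorphism: the product of two constant functions is the constant function with the product value, and $u_{HM(X)}(\delta_X(x))=u_X\circ\delta_X(x)=\delta_X(u_X(x))$. In particular the image $\delta_X(X)$ is a subsemigroup of $HM(X)$ invariant under the unit operation(s) of $HM(X)$, hence a (left, right) subunosemigroup in the sense of Section~\ref{s:Op}.

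Next I would verify that $\delta_X$ is a topological embedding. Continuity is immediate, since for a subbasic set $N(a,b,V,\varepsilon)\subset HM(X)$ one computes $\delta_X^{-1}\big(N(a,b,V,\varepsilon)\big)=X$ when $b-a<\varepsilon$ and $\delta_X^{-1}\big(N(a,b,V,\varepsilon)\big)=V$ otherwise, so the preimage is open in $X$ in both cases. For the reverse inclusion of topologies, given an open set $V\subset X$ and a point $x\in V$, the subbasic neighborhood $N(0,1,V,1)$ of $\delta_X(x)$ meets $\delta_X(X)$ precisely in $\delta_X(V)$ (the constant function with value $y$ lies in $N(0,1,V,1)$ if and only if $y\in V$), so $\delta_X$ sends open subsets of $X$ to relatively open subsets of $\delta_X(X)$. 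Hence $\delta_X$ is a homeomorphism of $X$ onto $\delta_X(X)$, and therefore a topological isomorphism of $X$ onto a (left, right) subunosemigroup of the contractible topological (left, right) unosemigroup $HM(X)$, which proves the corollary.

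The only step that is not entirely mechanical is the identification of the subspace topology induced on $\delta_X(X)$ by $HM(X)$ with the original topology of $X$; I expect this to be the main obstacle (though it remains elementary), since it is exactly here that one must unwind the Lebesgue-measure description of the subbasic neighborhoods $N(a,b,V,\varepsilon)$ rather than invoke any algebraic structure. The ditopological variant of the statement follows by the same argument, using Theorems~\ref{HM}--\ref{HM3} to see that $HM(X)$ inherits dicontinuity of the unit operation(s) from $X$.
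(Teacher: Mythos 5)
Your proposal is correct and follows exactly the route the paper intends: the paper derives the corollary in one line from the contractibility of $HM(X)$ together with Theorems~\ref{HM}--\ref{HM3}, implicitly using the standard embedding of $X$ into $HM(X)$ as constant functions, which you have simply made explicit (and your computations of $\delta_X^{-1}(N(a,b,V,\varepsilon))$ and of $N(0,1,V,1)\cap\delta_X(X)$ are accurate). No gap; you have just supplied details the paper leaves to the reader.
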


\section{Acknowledgements}

The authors would like to thank Oleg Gutik for fruitful valuable remarks concerning this paper.

\end{document}